\newtheorem{theorem}{Theorem}[section]
\newtheorem{lemma}[theorem]{Lemma}
\theoremstyle{definition}
\newtheorem{definition}[theorem]{Definition}
\theoremstyle{remark}
\newtheorem{remark}[theorem]{Remark}
\numberwithin{equation}{section}
\theoremstyle{problem}
\begin{document}

\title[Variable order space-fractional diffusion equation]
 {A Carleman type estimate of variable order space-fractional diffusion equations and applications to some inverse problems}

\author[J.X.Jia]{Junxiong Jia}
\address{School of Mathematics and Statistics,
Xi'an Jiaotong University,
 Xi'an
710049, China; Beijing Center for Mathematics and Information Interdisciplinary Sciences (BCMIIS);}
\email{jjx323@xjtu.edu.cn}

\author[J. Peng]{Jigen Peng}
\address{School of Mathematics and Statistics,
Xi'an Jiaotong University,
 Xi'an
710049, China; Beijing Center for Mathematics and Information Interdisciplinary Sciences (BCMIIS);}
\email{jgpengxjtu@126.com}


\subjclass[2010]{35R11, 35R30, 26A33}



\keywords{Variable order space-fractional diffusion equation,
Carleman type estimate,
Nonlocal vector calculus,
Backward diffusion problem,
Inverse source problem}

\begin{abstract}
Variable order space-fractional diffusion equation derived as an important model to describe anomalous diffusion phenomenon.
In this article, well-posedness has been proved for equations with the ``Dirichlet'' or the ``Neumann''
type volume-constrained conditions by using the technique of the nonlocal vector calculus.
Then some regularity properties have been obtained under the variable-order Sobolev space framework.
By choosing a space-independent weight function and using the technique of the nonlocal vector calculus,
a Carleman type estimate has been obtained. At last, the Carleman type estimate has been used both to a backward diffusion problem
and an inverse source problem to obtain some uniqueness and stability results.
\end{abstract}

\maketitle


\section{Introduction}

Nonlocal diffusion equations have been wildly studied for its important applications in
physics \cite{Metzler20001}, image processing \cite{BaiIEEE2007} and so on.
From the mathematical point of view, there are also a lot of investigations \cite{andreu2010nonlocal,Jia2014605,Li2012476,SAKAMOTO2011426}.
In this paper, we focus on a special type of nonlocal diffusion equations that is
the variable order space-fractional diffusion equations. In a simple setting,
the model we studied has the following form
\begin{align}\label{simpleEquation}
\partial_{t}u(x,t) + (-\Delta)^{\beta(x)}u(x,t) = f(x,t) \quad \text{in }\Omega\times(0,T),
\end{align}
accompanied with appropriate initial and boundary conditions. Here $\beta(\cdot)$ is a continuous function between $0$ and $1$.
Actually, we study a general model with anisotropic diffusion coefficient but, in this section, we only present this simple model for
the general model could not be clearly shown in a few words.

When $\beta(\cdot)$ is a constant, a lot of results have been obtained recently.
Du, Gunzburger, Lehoucq and Zhou \cite{du2012analysis, DuQiang2013MMMAS} develop the technique of the nonlocal vector calculus and
construct weak solutions for some general nonlocal equations.
In 2013, a Harnack's inequality has been obtained by Felsinger and Kassmann \cite{FelKass2013}
for a general space nonlocal diffusion equation which include equation (\ref{simpleEquation}) with constant $\beta$.
In 2015, Felsinger, Kassmann and Voigt \cite{Felsinger2015} construct weak solutions for
general nonlocal equations which include non-sysmetric kernels.
In 2016, Fern\'{a}ndez-Real and Ros-Oton \cite{fernandez2014boundary} prove some results about the boundary regularity,
which have been generalized to a very general setting in a recent paper \cite{fernandez2015regularity}.

When $\beta(\cdot)$ is a function, to the best of our knowledge, the studies seem to be limited.
When $\beta(\cdot)$ is Lipschitz continuous, Tscuchiya \cite{Masaaki1992} study a stochastic differential equation related to $(-\Delta)^{\beta(\cdot)}$.
In a recent book \cite{applebaum2009levy}, there are some studies from the operator semigroup perspective.
In \cite{silvestre2006holder}, Silvestre study the H\"{o}lder regularity of the elliptic equations with a
variable order fractional Laplace operator.

Because the studies of diffusion equations with variable order space-fractional Laplace operator are little,
there are also rare studies on the related inverse problems. However, for constant $\beta$ case,
there are already some investigations.
Uniqueness and some numerical results have been obtained in
\cite{Ramazan2016Simultaneous,tatar2015determination,tatar2013uniqueness,tatar2015inverse}
for equations with fractional Laplace operator.
For the diffusion equation with the fractional Laplace operator on a periodic domain,
backward diffusion problem has been studied in \cite{jia2015bayesian} under the Bayesian statistical framework and
the same backward diffusion problem has also been studied by using the
variable total variation regularization method in \cite{jia2016variable}.

In this paper, we firstly construct a weak solution, then an improved regularity result has been obtained.
Using a space-independent two parameter weight function and the first Green's identity for the
general nonlocal operators, we prove a Carleman type estimate.
Let us recall that the Carleman type estimates concerned with the integer-order parabolic equations are useful tools
in studying inverse problems e.g. \cite{Jishan2009Inverse,Oleg1998Inverse,BinWu2012Inverse,yamamoto2009carleman}.
However, for the fractional diffusion equations, there seems only one paper \cite{XiangXu2011Applicable} provide some results on
one-dimensional time-fractional space-integer order diffusion equations.
Hence, our result on the Carleman type estimate may be a useful tool in studying inverse problems related
to the space-fractional diffusion equations.
In the last part of this paper, we provide two applications of the Carleman type estimate.
Specifically speaking, a stability result for backward diffusion problem
has been proved and a uniqueness result for an inverse source problem has also been obtained.

The organization of this paper is as follows.
In Section \ref{nonlocalEquation}, some equivalence relation of function space with variable derivative
have been proved. Then, by using Lax-Milgram theorem and Galerkin-type arguments, well-posedness
results have been constructed for variable order space-fractional diffusion equation.
In the last part of Section \ref{nonlocalEquation}, some weak regularity properties have been studied for
our purpose on the inverse problems.
In Section \ref{CarlemanEstimateSection}, a Carleman type estimate has been constructed, which provide a powerful
tool for studying inverse problems.
In Section \ref{BackwardSection}, the stability of a backward diffusion problem has been obtained by using the
Carleman type estimate. As another application, an inverse source problem has also been studied.

\section{Nonlocal equations}\label{nonlocalEquation}

In this section, a short review of some important concepts in nonlocal vector calculus will be provided, then
we propose our general variable order space-fractional diffusion equation with the ``Dirichlet'' and
the ``Neumann'' type volume constrains.
After that, well-posedness and regularity properties will be studied which provide a foundation for
our investigations on some Carleman type estimates. Before going further, let us provide some notations used in
all of this paper:
\begin{itemize}
  \item $n$ denotes the space dimension; $\Omega \subset \mathbb{R}^{n}$ is a bounded open set;
  \item $L^{p}(\Omega)$ ($1\leq p<\infty$) denotes the usual Lebesgue integrable function space;
  $W^{s,p}(\Omega)$ denotes the usual Sobolev space for functions with $s$-order derivative belong to $L^{p}(\Omega)$;
  If $p=2$, we denote $H^{s}(\Omega)$ as $W^{s,2}(\Omega)$;
  \item For a function $u$ depend on the time variable $t$, $u'$ stands for the time derivative of $u$;
  \item Denote $A$ as a matrix, then $A^{T}$ stands for the transpose of the matrix $A$;
\end{itemize}

\subsection{A brief review of the nonlocal vector calculus}

In \cite{du2012analysis, DuQiang2013MMMAS}, a nonlocal vector calculus is developed.
Here, we briefly review the aspects of that calculus that are useful in what follows.

Given vector mapping $\nu(x,y), \alpha(x,y)\,:\, \mathbb{R}^{n}\times\mathbb{R}^{n}\rightarrow\mathbb{R}^{k}$
with $\alpha$ antisymmetric, i.e., $\alpha(y,x) = -\alpha(x,y)$, the action of the nonlocal divergence operator $\mathcal{D}$
on $\nu$ is defined as
\begin{align}\label{nonlocalDivergenceDef1}
\mathcal{D}(\nu)(x) := \int_{\mathbb{R}^{n}}(\nu(x,y) + \nu(y,x))\cdot\alpha(x,y)dy \quad \text{for }x\in\mathbb{R}^{n},
\end{align}
where $\mathcal{D}(\nu) \,:\, \mathbb{R}^{n}\rightarrow\mathbb{R}$.

Given the mapping $u(x)\,:\, \mathbb{R}^{n}\rightarrow\mathbb{R}$, the adjoint operator $\mathcal{D}^{*}$ corresponding to $\mathcal{D}$
is the operator whose action on $u$ is given by
\begin{align}\label{nonlocalDivergenceDef2}
\mathcal{D}^{*}(u)(x,y) = - (u(y)-u(x))\alpha(x,y) \quad \text{for }x,y\in \mathbb{R}^{n},
\end{align}
where $\mathcal{D}^{*}(u) \,:\, \mathbb{R}^{n}\times\mathbb{R}^{n}\rightarrow\mathbb{R}^{k}$.
With $\mathcal{D}^{*}$ denoting the adjoint of the nonlocal divergence operator, we view $-\mathcal{D}^{*}$ as
a nonlocal gradient.

From (\ref{nonlocalDivergenceDef1}) and (\ref{nonlocalDivergenceDef2}), one easily deduces that if $a(t,x,y) = a(t,y,x)$
denotes a second-order tensor satisfying $a = a^{T}$, then
\begin{align*}
\mathcal{D}(a\cdot\mathcal{D}^{*}u)(x) = -2 \int_{\mathbb{R}^{n}}(u(y)-u(x))\alpha(x,y)\cdot(a(t,x,y)\cdot\alpha(x,y))dy
\quad \text{for }x\in \mathbb{R}^{n},
\end{align*}
where $\mathcal{D}(a\cdot\mathcal{D}^{*}u)\,:\, \mathbb{R}^{n}\rightarrow\mathbb{R}$.
In the following, sometimes, we denote $\gamma(t,x,y) := \alpha(x,y)\cdot(a(t,x,y)\cdot\alpha(x,y))$.
Because $\mathcal{D}$ and $\mathcal{D}^{*}$ are adjoint operators, if $a$ is also positive definite, the operator
$\mathcal{D}(a\cdot\mathcal{D}^{*}\cdot)$ is nonnegative.

Given an open subset $\Omega\subset\mathbb{R}^{n}$, the corresponding interaction domain is defined as
\begin{align}\label{nonlocalDivergenceDef3}
\Omega_{\mathcal{I}} := \left\{
y\in\mathbb{R}^{n}\backslash\Omega \quad \text{such that }\alpha(x,y) \neq 0 \text{ for some }x\in\Omega
\right\},
\end{align}
so that $\Omega_{\mathcal{I}}$ consists of those points outside of $\Omega$ that interact with points in $\Omega$.
Note that the situation $\Omega_{\mathcal{I}} = \mathbb{R}^{n}\backslash\Omega$ is allowable, as is $\Omega = \mathbb{R}^{n}$,
in which case $\Omega_{\mathcal{I}} = \emptyset$.
Then, corresponding to the divergence operator $\mathcal{D}(\nu)\,:\,\mathbb{R}^{n}\rightarrow\mathbb{R}$ defined
in (\ref{nonlocalDivergenceDef1}), we define the action of the nonlocal interaction operator
$\mathcal{N}(\nu)\,:\, \mathbb{R}^{n}\rightarrow\mathbb{R}$ on $\nu$ by
\begin{align}\label{nonlocalDivergenceDef4}
\mathcal{N}(\nu)(x) := -\int_{\Omega\cup\Omega_{\mathcal{I}}}(v(x,y)+\nu(y,x))\cdot\alpha(x,y)dy
\quad \text{for }x\in\Omega_{\mathcal{I}}.
\end{align}
It is shown in \cite{du2012analysis, DuQiang2013MMMAS} that $\int_{\Omega_{\mathcal{I}}}\mathcal{N}(\nu)dx$ can be
viewed as a nonlocal flux out of $\Omega$ into $\Omega_{\mathcal{I}}$.

With $\mathcal{D}$ and $\mathcal{N}$ defined as in (\ref{nonlocalDivergenceDef1}) and (\ref{nonlocalDivergenceDef4}), respectively,
we have the nonlocal Gauss theorem
\begin{align}\label{nonlocalDivergenceDef5}
\int_{\Omega}\mathcal{D}(\nu)dx = \int_{\Omega_{\mathcal{I}}}\mathcal{N}(\nu)dx.
\end{align}

Next, let $u(x)$ and $v(x)$ denote scalar functions. Then it is a simple matter to show that the nonlocal
divergence formula (\ref{nonlocalDivergenceDef5}) implies the generalized nonlocal Green's first identity
\begin{align}\label{nonlocalGreenFirst}
\int_{\Omega}v\mathcal{D}(a\cdot\mathcal{D}^{*}u)dx -
\int_{\Omega\cap\Omega_{\mathcal{I}}}\int_{\Omega\cap\Omega_{\mathcal{I}}}\mathcal{D}^{*}v\cdot(a\cdot \mathcal{D}^{*}u)dydx
= \int_{\Omega_{\mathcal{I}}}v\mathcal{N}(a\cdot\mathcal{D}^{*}u)dx.
\end{align}

\subsection{Nonlocal diffusion equations}

In this subsection, we will describe our model precisely which include equation (\ref{simpleEquation}) as a special case.
With the nonlocal vector calculus, space nonlocal diffusion equation with the ``Dirichlet'' volume-constrained condition can be written as
\begin{align}\label{Equation1}
\left\{\begin{aligned}
\partial_{t}u + \mathcal{D}(a\cdot \mathcal{D}^{*}u) & = f(t,x) \quad && \text{on }\Omega \times (0,T),  \\
u(t,x) & = 0 && \text{on }\Omega_{\mathcal{I}} \times (0,T), \\
u(0,x) & = u_{0}(x) && \text{on }\Omega\cup\Omega_{\mathcal{I}}.
\end{aligned}\right.
\end{align}
Space nonlocal diffusion equation with the ``Neumann'' volume-constrained condition could be written as
\begin{align}\label{Equation2}
\left\{\begin{aligned}
\partial_{t}u + \mathcal{D}(a\cdot \mathcal{D}^{*}u) & = f(t,x) \quad && \text{on }\Omega \times (0,T),  \\
\mathcal{N}(a\cdot\mathcal{D}^{*}u) & = 0 && \text{on }\Omega_{\mathcal{I}}\times(0,T),   \\
u(0,x) & = u_{0}(x) && \text{on }\Omega\cup\Omega_{\mathcal{I}},   \\
\int_{\Omega\cup\Omega_{\mathcal{I}}}udx & = 0. &&
\end{aligned}\right.
\end{align}

Let us specify the functions $\alpha(\cdot,\cdot)$, $a(\cdot,\cdot,\cdot)$ in the definitions of $\mathcal{D}$
and $\mathcal{D}^{*}$. Assume $\beta(x)$ to be a continuous function with upper bound $\beta^{*} < 1$
and lower bound $\beta_{*} > 0$. We choose
\begin{align}\label{defineAlpha}
\alpha(x,y) = \frac{y-x}{|y-x|^{n/2+\beta(x)+1}}1_{B_{\epsilon}(x)}(y)
\end{align}
and
\begin{align}\label{defineAlpha1}
a(t,x,y) = \left(a_{ij}(t,x,y)\right)_{1\leq i,j\leq n}
\end{align}
with $a_{ij}(t,x,y)\in C^{1}([0,T]\times\mathbb{R}^{n}\times\mathbb{R}^{n})$, $a_{ij} = a_{ji}$
and in addition, we assume
\begin{align}\label{aAssumption1}
\begin{split}
& 0 < a_{*}|\xi|^{2} \leq \sum_{i,j=1}^{n}a_{ij}(t,x,y)\xi_{i}\xi_{j} \leq a^{*} |\xi|^{2}, \\
& \quad\quad
\sum_{i,j=1}^{n}\partial_{t}a_{ij}(t,x,y)\xi_{i}\xi_{j} \leq a^{*} |\xi|^{2}
\end{split}
\end{align}
for all $\xi \in \mathbb{R}^{n}$ and $y \in B_{\epsilon}(x)$, where
$B_{\epsilon}(x):= \{ y\in\Omega\cup\Omega_{\mathcal{I}} \,:\, |y-x|\leq\epsilon \}$.
Denote $\gamma(t,x,y) := \alpha(x,y) \cdot a(t,x,y) \cdot \alpha(x,y)$, then we know that
\begin{align}\label{defineGamma}
\gamma(t,x,y) = \frac{(y-x)\cdot a(t,x,y)\cdot (y-x)}{|y-x|^{n+2\beta(x)+2}}1_{B_{\epsilon}(x)}(y).
\end{align}

\begin{remark}
If we choose $\epsilon = \infty$, then we easily know that $\Omega_{\mathcal{I}} = \mathbb{R}^{n}\backslash\Omega$.
Assume $a(t,x,y)$ to be the identity matrix, then the operator $\mathcal{D}(a\cdot\mathcal{D}^{*}\cdot)$ could be simplified as
$(-\Delta)^{\beta(\cdot)}$ which is appeared in equation (\ref{simpleEquation}).
Hence, equations (\ref{Equation1}) and (\ref{Equation2}) include equation (\ref{simpleEquation}) as a special case.
\end{remark}

As in \cite{du2012analysis}, we define the nonlocal energy norm, nonlocal energy space, and nonlocal volume-constrained energy space by
\begin{align}
& |||u||| := \left( \frac{1}{2}\int_{\Omega\cup\Omega_{\mathcal{I}}}\int_{\Omega\cup\Omega_{\mathcal{I}}}
\mathcal{D}^{*}(u)(x,y)\cdot(a(t,x,y)\cdot\mathcal{D}^{*}(u)(x,y)) dydx \right)^{1/2},  \label{nenrgyNorm} \\
& V(\Omega\cup\Omega_{\mathcal{I}}) := \{ u\in L^{2}(\Omega\cup\Omega_{\mathcal{I}}) \,:\, |||u||| < \infty \}, \label{nonlocalEnengySpace} \\
& V_{c}(\Omega\cup\Omega_{\mathcal{I}}) := \{ u\in V(\Omega\cup\Omega_{\mathcal{I}}) \,:\, E_{c}(u) = 0 \}, \label{nonlocalVolumeConstrained}
\end{align}
respectively, where $E_{c}(u)$ given by
\begin{align*}
E_{c}(u) := \int_{\Omega_{\mathcal{I}}} u^{2} dx \quad \text{if }\Omega_{\mathcal{I}} \neq \emptyset
\end{align*}
in the ``Dirichlet'' volume-constrained condition case and given by
\begin{align*}
E_{c}(u) := \left(\int_{\Omega\cup\Omega_{\mathcal{I}}} u dx\right)^{2} \quad \text{if }\Omega_{\mathcal{I}} \neq \emptyset
\end{align*}
in the ``Neumann'' volume-constrained condition case.
Define $|||u|||_{V_{c}^{*}(\Omega\cup\Omega_{\mathcal{I}})}$ to be the norm for the dual space $V_{c}^{*}(\Omega\cup\Omega_{\mathcal{I}})$
of $V_{c}(\Omega\cup\Omega_{\mathcal{I}})$ with respect to the standard $L^{2}(\Omega\cup\Omega_{\mathcal{I}})$ duality pairing.

For a Banach space $X$, we define
\begin{align}\label{defW}
W(0,T;X) := \left\{ u\in L^{2}(0,T;X)\,;\, u' \text{ exists and }u'\in L^{2}(0,T;X^{*}) \right\}.
\end{align}
$W(0,T;X)$ is a Banach space endowed with the norm
\begin{align*}
\|u\|_{W(0,T;X)}^{2} = \int_{0}^{T} \|u(t)\|_{X}^{2} dt + \int_{0}^{T} \|u'(t)\|_{X^{*}}^{2} dt.
\end{align*}
In addition, we denote
\begin{align}
L_{c}^{2}(\Omega\cup\Omega_{\mathcal{I}}) := \{ u\in L^{2}(\Omega\cup\Omega_{\mathcal{I}}) \,:\,
E_{c}(u) = 0 \}.
\end{align}

\begin{definition}\label{parabolicVariational}
(Parabolic variational formulation) Let $u_{0} \in L_{c}^{2}(\Omega\cup\Omega_{\mathcal{I}})$ and $f \in L^{2}(0,T; V_{c}^{*}(\Omega))$.
We say that $u \in W(0,T; V_{c}(\Omega\cup\Omega_{\mathcal{I}}))$ is a solution of (\ref{Equation1}) or (\ref{Equation2}) if for all
$\phi\in V_{c}(\Omega\cup\Omega_{\mathcal{I}})$
\begin{align*}
\int_{\Omega} \partial_{t}u \phi dx +
\int_{\tilde{\Omega}}\int_{\tilde{\Omega}}\mathcal{D}^{*}(u)\cdot(a(t,x,y)\cdot\mathcal{D}^{*}\phi)dydx
= & \int_{\Omega}f\phi dx, \quad \text{for a.e. }t\in(0,T), \\
u(0,x) = & u_{0}(x).
\end{align*}
\end{definition}

For convenience, we always denote $\tilde{\Omega}:=\Omega\cup\Omega_{\mathcal{I}}$,
$Q := \Omega\times(0,T)$ and $\tilde{Q} := \tilde{\Omega}\times(0,T)$ in the following parts of this article.

\subsection{Equivalence of spaces and well-posedness}

For a continuous function $\beta(\cdot)$ satisfying $0 < \beta_{*} \leq \beta(x) \leq \beta^{*} < 1$,
the variable-order Sobolev space is defined as
\begin{align}\label{defVariSobolev}
H^{\beta(\cdot)}(\Omega\cup\Omega_{\mathcal{I}}) := \left\{ u\in L^{2}(\Omega\cup\Omega_{\mathcal{I}}) \,:\,
\|u\|_{L^{2}(\Omega\cup\Omega_{\mathcal{I}})} + |u|_{H^{\beta(\cdot)}(\Omega\cup\Omega_{\mathcal{I}})} < \infty \right\},
\end{align}
where
\begin{align*}
|u|^{2}_{H^{\beta(\cdot)}(\Omega\cup\Omega_{\mathcal{I}})} := \int_{\Omega\cup\Omega_{\mathcal{I}}}\int_{\Omega\cup\Omega_{\mathcal{I}}}
\frac{(u(y)-u(x))^2}{|y-x|^{n+2\beta(x)}} dydx.
\end{align*}
Moreover, we define the subspaces
\begin{align}\label{defVariSobolev1}
H_{c}^{\beta(\cdot)}(\Omega\cup\Omega_{\mathcal{I}}) := \{ u\in H^{\beta(\cdot)}(\Omega\cup\Omega_{\mathcal{I}}) \,:\,
E_{c}(u) = 0 \}.
\end{align}

Now, let us provide some useful properties about the variable-order Sobolev space defined in (\ref{defVariSobolev}) and (\ref{defVariSobolev1}).
\begin{lemma}\label{embeddingVariableSpace}
Let $\beta(\cdot)$ to be a continuous function and there exist $\beta_{*}$ and $\beta^{*}$ such that
$0 < \beta_{*} \leq \beta(x) \leq \beta^{*} < 1$. Let $\Omega$, $\Omega_{\mathcal{I}}$ are open sets in $\mathbb{R}^{n}$,
and $u\,:\, \Omega\cup\Omega_{\mathcal{I}} \rightarrow \mathbb{R}$ be a measurable function. Then
\begin{align*}
C^{-1}\|u\|_{H^{\beta_{*}}(\Omega\cup\Omega_{\mathcal{I}})} \leq
\|u\|_{H^{\beta(\cdot)}(\Omega\cup\Omega_{\mathcal{I}})} \leq
C\|u\|_{H^{\beta^{*}}(\Omega\cup\Omega_{\mathcal{I}})}
\end{align*}
for some suitable positive constant $C = C(n,\beta_{*},\beta^{*}) \geq 1$. In particular,
\begin{align*}
H^{\beta^{*}}(\Omega\cup\Omega_{\mathcal{I}}) \subset H^{\beta(\cdot)}(\Omega\cup\Omega_{\mathcal{I}})
\subset H^{\beta_{*}}(\Omega\cup\Omega_{\mathcal{I}}).
\end{align*}
\end{lemma}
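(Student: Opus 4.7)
The plan is to prove both inequalities by splitting the double integral defining $|u|_{H^{\beta(\cdot)}(\Omega\cup\Omega_{\mathcal{I}})}^2$ according to whether $|y-x|\le 1$ (near field) or $|y-x|>1$ (far field). The reason this split is natural: the map $t\mapsto |y-x|^{-t}$ is monotone in $t$, but its monotonicity direction depends on whether $|y-x|$ is less than or greater than $1$, so any pointwise comparison between kernels $|y-x|^{-(n+2\beta(x))}$ and $|y-x|^{-(n+2\beta_{*})}$ or $|y-x|^{-(n+2\beta^{*})}$ must be treated piecewise.

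For the upper bound $\|u\|_{H^{\beta(\cdot)}}\le C\|u\|_{H^{\beta^{*}}}$, I would argue as follows. On the near region $\{|y-x|\le 1\}$, since the exponent satisfies $n+2\beta(x)\le n+2\beta^{*}$ and the base is $\le 1$, we get
\[
\frac{1}{|y-x|^{n+2\beta(x)}}\le \frac{1}{|y-x|^{n+2\beta^{*}}},
\]
so the near-field part of $|u|_{H^{\beta(\cdot)}}^{2}$ is dominated by $|u|_{H^{\beta^{*}}}^{2}$. On the far region $\{|y-x|>1\}$, the inequality flips and yields $|y-x|^{-(n+2\beta(x))}\le |y-x|^{-(n+2\beta_{*})}$. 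Here I use the elementary bound $(u(y)-u(x))^{2}\le 2u(y)^{2}+2u(x)^{2}$ together with the fact that $\sup_{x}\int_{|y-x|>1}|y-x|^{-(n+2\beta_{*})}\,dy$ is finite to control this piece by $C\|u\|_{L^{2}}^{2}$. Combining the two pieces with the trivial $\|u\|_{L^{2}}\le \|u\|_{H^{\beta^{*}}}$ gives the claimed inequality.

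For the lower bound $\|u\|_{H^{\beta_{*}}}\le C\|u\|_{H^{\beta(\cdot)}}$, I would run exactly the same near/far split but with the roles of $\beta_{*}$ and $\beta(x)$ swapped. On $\{|y-x|\le 1\}$ the inequality $|y-x|^{-(n+2\beta_{*})}\le |y-x|^{-(n+2\beta(x))}$ (again because the base is $\le 1$ and now the exponent has increased) lets us dominate the near-field part of $|u|_{H^{\beta_{*}}}^{2}$ by $|u|_{H^{\beta(\cdot)}}^{2}$. On $\{|y-x|>1\}$ the kernel $|y-x|^{-(n+2\beta_{*})}$ is itself integrable in $y$ uniformly in $x$, so by the same $L^{2}$-trick this region is bounded by $C\|u\|_{L^{2}}^{2}$, which in turn is bounded by $\|u\|_{H^{\beta(\cdot)}}^{2}$.

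I do not expect a serious obstacle in this proof; the only care needed is bookkeeping the direction of the monotonicity in each region and ensuring the constants depend only on $n$, $\beta_{*}$, $\beta^{*}$ (through the integrals $\int_{|y-x|>1}|y-x|^{-(n+2\beta_{*})}\,dy$). The inclusion chain $H^{\beta^{*}}\subset H^{\beta(\cdot)}\subset H^{\beta_{*}}$ is an immediate corollary of the two norm inequalities.
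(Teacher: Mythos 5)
Your proposal is correct and follows essentially the same route as the paper: split the double integral at $|y-x|=1$, use the pointwise monotonicity of $t\mapsto |y-x|^{-t}$ on the near region, and control the far region via $(u(y)-u(x))^{2}\le 2u(y)^{2}+2u(x)^{2}$ together with the integrability of $|z|^{-(n+2\beta_{*})}$ on $\{|z|\ge 1\}$. No gaps; the constant bookkeeping matches the paper's $C=C(n,\beta_{*},\beta^{*})$.
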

\begin{proof}
Denote $\tilde{\Omega}:=\Omega\cup\Omega_{\mathcal{I}}$. Firstly, we have
\begin{align}\label{embeddingGuji1}
\begin{split}
\int_{\tilde{\Omega}}\int_{\tilde{\Omega}\cap\{|y-x|\geq 1\}}\frac{|u(x)|^{2}}{|y-x|^{n+2\beta_{*}}}dxdy \leq &
\int_{\tilde{\Omega}}\left( \int_{|z|\geq 1}\frac{1}{|z|^{n+2\beta_{*}}} dx \right)|u(x)|^{2}dx \\
\leq & C \|u\|_{L^{2}(\Omega)}^{2},
\end{split}
\end{align}
where the integrability of the kernel $1/|z|^{n+2\beta_{*}}$ has been used.
If using $\beta^{*}$ instead of $\beta_{*}$, the above estimate also holds.
Because
\begin{align*}
\int_{\tilde{\Omega}}\int_{\tilde{\Omega}}\frac{|u(y)-u(x)|^{2}}{|y-x|^{n+2\beta_{*}}}dydx = &
\int_{\tilde{\Omega}}\int_{\tilde{\Omega}\cap\{|y-x|\geq 1\}}\frac{|u(y)-u(x)|^{2}}{|y-x|^{n+2\beta_{*}}}dydx \\
& + \int_{\tilde{\Omega}}\int_{\tilde{\Omega}\cap\{|y-x|< 1\}}\frac{|u(y)-u(x)|^{2}}{|y-x|^{n+2\beta_{*}}}dydx \\
= & \text{I}_{1} + \text{I}_{2},
\end{align*}
\begin{align*}
\text{I}_{1} \leq 2 \int_{\tilde{\Omega}}\int_{\tilde{\Omega}\cap\{|y-x|\geq 1\}}\frac{|u(y)|^{2}+|u(x)|^{2}}{|y-x|^{n+2\beta_{*}}}dydx
\leq C \|u\|_{L^{2}(\tilde{\Omega})}^{2}
\end{align*}
and
\begin{align*}
\text{I}_{2} \leq \int_{\tilde{\Omega}}\int_{\tilde{\Omega}\cap\{|y-x|< 1\}}\frac{|u(y)-u(x)|^{2}}{|y-x|^{n+2\beta(x)}}dydx
\leq \|u\|_{H^{\beta(\cdot)}(\tilde{\Omega})}^{2},
\end{align*}
we infer that
\begin{align*}
\|u\|_{H^{\beta_{*}}(\tilde{\Omega})} \leq C \|u\|_{H^{\beta(\cdot)}(\tilde{\Omega})},
\end{align*}
where (\ref{embeddingGuji1}) has been used when we estimate $\text{I}_{1}$.
Similarly, we could obtain
\begin{align*}
\|u\|_{H^{\beta(\cdot)}(\tilde{\Omega})} \leq C \|u\|_{H^{\beta^{*}}(\tilde{\Omega})}.
\end{align*}
Hence, the proof is completed.
\end{proof}

\begin{lemma}\label{upperEmbedding}
Let the function $\gamma$ defined as in (\ref{defineGamma}). Then
\begin{align*}
|u|_{H^{\beta(\cdot)}(\tilde{\Omega})}^{2} \leq a_{*}^{-1}|||u|||^{2} + C \epsilon^{-2\beta_{*}}\|u\|_{L^{2}(\tilde{\Omega})}^{2},
\end{align*}
and
\begin{align*}
|||u|||^{2} \leq a^{*}|u|_{H^{\beta(\cdot)}(\tilde{\Omega})}^{2}.
\end{align*}
\end{lemma}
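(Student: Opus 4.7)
The plan is to compare the integrand of $|||u|||^{2}$ with the kernel appearing in $|u|_{H^{\beta(\cdot)}(\tilde{\Omega})}^{2}$ pointwise. Expanding the definitions, one finds
$$\mathcal{D}^{*}(u)(x,y) \cdot (a(t,x,y) \cdot \mathcal{D}^{*}(u)(x,y)) = (u(y)-u(x))^{2} \gamma(t,x,y),$$
so that combining (\ref{defineGamma}) with the ellipticity assumption (\ref{aAssumption1}) yields the two-sided pointwise bound
$$\frac{a_{*}}{|y-x|^{n+2\beta(x)}} 1_{B_{\epsilon}(x)}(y) \leq \gamma(t,x,y) \leq \frac{a^{*}}{|y-x|^{n+2\beta(x)}} 1_{B_{\epsilon}(x)}(y).$$

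For the upper inequality $|||u|||^{2}\leq a^{*}|u|_{H^{\beta(\cdot)}(\tilde{\Omega})}^{2}$, the argument is immediate: the integrand of $|||u|||^{2}$ is supported on pairs $(x,y)$ with $y\in B_{\epsilon}(x)$ and is dominated there by $a^{*}(u(y)-u(x))^{2}|y-x|^{-n-2\beta(x)}$; enlarging the region of integration to all of $\tilde{\Omega}\times\tilde{\Omega}$ and comparing with the definition of the variable-order seminorm delivers the inequality (with room left over from the factor $\tfrac{1}{2}$ in the definition of $|||\cdot|||$).

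For the lower inequality the strategy is to split the double integral defining $|u|_{H^{\beta(\cdot)}(\tilde{\Omega})}^{2}$ into a near-diagonal piece over $\{y\in B_{\epsilon}(x)\}$ and a tail piece over $\{|y-x|>\epsilon\}$. On the near-diagonal piece the pointwise bound $|y-x|^{-n-2\beta(x)}\leq a_{*}^{-1}\gamma(t,x,y)$ gives a contribution controlled by $a_{*}^{-1}|||u|||^{2}$. On the tail piece I would apply $(u(y)-u(x))^{2}\leq 2(u(x)^{2}+u(y)^{2})$ and, via Fubini, reduce matters to estimating $\int_{|z|>\epsilon}|z|^{-n-2\beta(x)}\,dz$, which evaluates in spherical coordinates to a constant multiple of $\epsilon^{-2\beta(x)}/\beta(x)$; the uniform bounds $\beta_{*}\leq\beta(x)\leq\beta^{*}$ then convert this into the claimed $C\epsilon^{-2\beta_{*}}\|u\|_{L^{2}(\tilde{\Omega})}^{2}$ term.

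The one delicate point I anticipate is the tail piece when Fubini is used to handle the $u(y)^{2}$ contribution: the exponent $\beta(x)$ remains inside the $x$-integral, so one needs a bound on $|z|^{-n-2\beta(x)}$ uniform in $x$. Splitting the tail further according to whether $|z|\leq 1$ or $|z|>1$ and applying $\beta_{*}\leq\beta(x)\leq\beta^{*}$ in the appropriate regime removes this obstacle. No deeper difficulty is expected, and the two estimates in the lemma follow by combining the near and tail contributions.
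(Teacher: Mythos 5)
Your proposal is correct and follows essentially the same route as the paper: the two-sided pointwise comparison of $\gamma(t,x,y)$ with $|y-x|^{-n-2\beta(x)}1_{B_{\epsilon}(x)}(y)$ via (\ref{aAssumption1}), the near-diagonal/tail splitting of the seminorm, and the Fubini argument reducing the tail to $\int_{|z|>\epsilon}|z|^{-n-2\beta(x)}\,dz$. Your remark about needing to split the tail at $|z|=1$ to handle the $x$-dependent exponent uniformly is a point the paper glosses over, so no changes are needed.
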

\begin{proof}
We have
\begin{align*}
|u|_{H^{\beta(\cdot)}(\tilde{\Omega})}^{2} = &
\int_{\tilde{\Omega}}\int_{\tilde{\Omega}\cap B_{\epsilon}(x)} \frac{|u(y)-u(x)|^{2}}{|y-x|^{n+2\beta(x)}}dydx
+ \int_{\tilde{\Omega}}\int_{\tilde{\Omega}\backslash B_{\epsilon}(x)}\frac{|u(y)-u(x)|^{2}}{|y-x|^{n+2\beta(x)}}dydx \\
\leq & \int_{\tilde{\Omega}}\int_{\tilde{\Omega}\cap B_{\epsilon}(x)} \frac{|u(y)-u(x)|^{2}}{|y-x|^{n+2\beta(x)}}dydx
+ \int_{\tilde{\Omega}}\int_{\tilde{\Omega}\backslash B_{\epsilon}(x)}\frac{|u(y)|^{2}+|u(x)|^{2}}{|y-x|^{n+2\beta(x)}}dydx.
\end{align*}
Noticing
\begin{align*}
\int_{\tilde{\Omega}}\int_{\tilde{\Omega}\backslash B_{\epsilon}(x)}\frac{|u(x)|^{2}}{|y-x|^{n+2\beta(x)}}dydx
\leq & C \int_{\tilde{\Omega}}\int_{\epsilon}^{\infty}\frac{1}{|z|^{n+2\beta_{*}}}dz |u(x)|^{2}dx \\
\leq & C \epsilon^{-2\beta_{*}}\|u\|_{L^{2}(\tilde{\Omega})}^{2},
\end{align*}
and
\begin{align*}
\int_{\tilde{\Omega}}\int_{\tilde{\Omega}\backslash B_{\epsilon}(x)}\frac{|u(y)|^{2}}{|y-x|^{n+2\beta(x)}}dydx
= & \int_{\tilde{\Omega}}\int_{\tilde{\Omega}\backslash B_{\epsilon}(y)} \frac{1}{|y-x|^{n+2\beta(x)}}dx |u(y)|^{2} dy \\
\leq & C \epsilon^{-2\beta_{*}}\|u\|_{L^{2}(\tilde{\Omega})}^{2},
\end{align*}
we conclude that
\begin{align*}
|u|_{H^{\beta(\cdot)}(\tilde{\Omega})}^{2} \leq a_{*}^{-1}|||u|||^{2} + C \epsilon^{-2\beta_{*}}\|u\|_{L^{2}(\tilde{\Omega})}^{2}.
\end{align*}
The second inequality follows directly from
\begin{align*}
\int_{\tilde{\Omega}}\int_{\tilde{\Omega}} \mathcal{D}^{*}u\cdot (a(t,x,y)\cdot \mathcal{D}^{*}u) dydx
\leq a^{*}\int_{\tilde{\Omega}}\int_{\tilde{\Omega}}\frac{|u(y)-u(x)|^{2}}{|y-x|^{n+2\beta(x)}}dydx.
\end{align*}
\end{proof}

\begin{lemma}\label{poincareInequality}
(Poincar\'{e} type inequality)
Let the function $\gamma$ defined as in (\ref{defineGamma}). Then there exists a positive constant $C$ such that
\begin{align*}
\|u\|_{L^{2}(\tilde{\Omega})}^{2} \leq C |||u|||^{2} \quad\quad \forall \, u \in V_{c}(\tilde{\Omega}).
\end{align*}
\end{lemma}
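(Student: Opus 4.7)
The plan is to argue by contradiction and compactness, combining the two preceding lemmas with a standard argument for constant-order fractional Poincaré inequalities.

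Suppose no such constant $C$ exists. Then I can extract a sequence $\{u_n\} \subset V_c(\tilde{\Omega})$ with $\|u_n\|_{L^2(\tilde{\Omega})} = 1$ and $|||u_n||| \to 0$. The first step is to use Lemma \ref{upperEmbedding} to bound $|u_n|_{H^{\beta(\cdot)}(\tilde{\Omega})}^2 \leq a_*^{-1} |||u_n|||^2 + C\epsilon^{-2\beta_*}\|u_n\|_{L^2(\tilde{\Omega})}^2$, which is uniformly bounded. Lemma \ref{embeddingVariableSpace} then upgrades this to a uniform bound on $\|u_n\|_{H^{\beta_*}(\tilde{\Omega})}$.

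Next, since $\tilde{\Omega}$ is bounded (recall $\Omega$ is bounded and $\Omega_{\mathcal{I}} \subset \{y : \mathrm{dist}(y,\Omega)\leq\epsilon\}$ when $\epsilon < \infty$) the standard compact embedding $H^{\beta_*}(\tilde{\Omega}) \hookrightarrow\hookrightarrow L^2(\tilde{\Omega})$ lets me extract a subsequence (not relabeled) with $u_n \to u$ strongly in $L^2(\tilde{\Omega})$ and $u_n \rightharpoonup u$ weakly in $H^{\beta_*}(\tilde{\Omega})$. Strong $L^2$ convergence gives $\|u\|_{L^2(\tilde{\Omega})} = 1$ and $E_c(u) = 0$ (by continuity of $E_c$ on $L^2$). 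The nonlocal energy $|||\cdot|||$ is induced by the symmetric positive-definite bilinear form coming from $a$, hence it is weakly lower semicontinuous on the energy space; combining this with $u_n \rightharpoonup u$ (which, modulo the same compactness argument applied on $\tilde\Omega\times\tilde\Omega$ against the kernel $\gamma$, transfers to weak convergence of $\mathcal{D}^* u_n$) yields $|||u|||^2 \leq \liminf_{n\to\infty} |||u_n|||^2 = 0$.

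From $|||u||| = 0$ and the positive definiteness assumption (\ref{aAssumption1}) on $a$, I deduce that $\mathcal{D}^*u(x,y) = 0$ for a.e.\ $(x,y)$ with $y \in B_\epsilon(x)\cap\tilde\Omega$, i.e.\ $u(y) = u(x)$ whenever $|y-x|\leq\epsilon$. A chain-of-balls argument using the connectedness of $\tilde{\Omega}$ (or, in the Dirichlet case, propagating the value $0$ from $\Omega_{\mathcal{I}}$ inward through overlapping $\epsilon$-balls) forces $u$ to be constant on $\tilde{\Omega}$. The volume constraint $E_c(u) = 0$ then pins down this constant to be zero: in the Dirichlet case $\int_{\Omega_{\mathcal{I}}} u^2\,dx = 0$ forces $u \equiv 0$ on $\Omega_{\mathcal{I}}$, while in the Neumann case $\int_{\tilde{\Omega}} u\,dx = 0$ forces the constant value to vanish. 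In either case $u \equiv 0$, contradicting $\|u\|_{L^2(\tilde{\Omega})} = 1$.

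The main obstacle I anticipate is the propagation step showing that $\mathcal{D}^* u = 0$ implies $u$ is constant: the truncation of $\alpha$ to $B_\epsilon(x)$ in (\ref{defineAlpha}) means one only obtains equality of $u$ on $\epsilon$-neighborhoods, so the argument relies on $\tilde{\Omega}$ being $\epsilon$-connected (or on using the Dirichlet trace on $\Omega_{\mathcal{I}}$ as a seed). A secondary delicate point is justifying weak lower semicontinuity of $|||\cdot|||$ along the weakly-$H^{\beta_*}$ convergent sequence, which I handle by noting that $\mathcal{D}^*u_n$ is bounded in the weighted $L^2$ space defined by $\gamma$ and passing to a further weak limit that must coincide with $\mathcal{D}^* u$.
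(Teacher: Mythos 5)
Your proof is correct and follows essentially the same route as the paper: the paper's proof simply invokes the compact embedding $H^{\beta(\cdot)}(\tilde{\Omega})\hookrightarrow L^{2}(\tilde{\Omega})$ (obtained via Lemma \ref{embeddingVariableSpace}) and then cites the ``standard contradiction argument,'' which is precisely the normalization--compactness--rigidity argument you spell out, with Lemma \ref{upperEmbedding} supplying the needed comparison between $|||\cdot|||$ and $|\cdot|_{H^{\beta(\cdot)}}$. The only points you elaborate beyond the paper (lower semicontinuity of the energy, the $\epsilon$-chain argument forcing the limit to be constant) are exactly the details the paper delegates to its references, and you handle them adequately.
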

\begin{proof}
From Lemma \ref{embeddingVariableSpace}, we know that $H^{\beta(\cdot)}(\tilde{\Omega})\subset H^{\beta_{*}}(\tilde{\Omega})$.
Then, because $H^{\beta_{*}}(\tilde{\Omega})$ is compactly embedded in $L^{2}(\tilde{\Omega})$,
the embedding $H^{\beta(\cdot)}(\tilde{\Omega}) \hookrightarrow L^{2}(\tilde{\Omega})$ is compact.
With this compact embedding result, this lemma could be proved by standard contradiction arguments.
For the standard contradiction arguments, we refer to \cite{du2012analysis,evans1998partial}.
\end{proof}

From Lemma \ref{upperEmbedding} and Lemma \ref{poincareInequality}, we obtain the following theorem.
\begin{theorem}\label{equiNorm1}
Let the function $\gamma$ defined as in (\ref{defineGamma}). Then there exists a constant $C > 0$ such that
\begin{align*}
C^{-1}\|u\|_{H^{\beta(\cdot)}(\Omega\cup\Omega_{\mathcal{I}})} \leq |||u||| \leq C \|u\|_{H^{\beta(\cdot)}(\Omega\cup\Omega_{\mathcal{I}})},
\quad \forall\, u\in V_{c}(\Omega\cup\Omega_{I}).
\end{align*}
In addition, the constrained spaces $H_{c}^{\beta(\cdot)}(\Omega\cup\Omega_{\mathcal{I}})$ and $V_{c}(\Omega\cup\Omega_{\mathcal{I}})$
are equivalent.
\end{theorem}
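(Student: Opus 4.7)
The statement is essentially a corollary: all the technical estimates have been packaged into Lemma \ref{upperEmbedding} and Lemma \ref{poincareInequality}, so the plan is simply to chain these results together in the right order.

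The plan is to establish the two inequalities separately and then deduce set equality. For the upper bound $|||u||| \leq C\|u\|_{H^{\beta(\cdot)}(\tilde{\Omega})}$, I would invoke the second estimate of Lemma \ref{upperEmbedding} directly: $|||u|||^{2} \leq a^{*}|u|_{H^{\beta(\cdot)}(\tilde{\Omega})}^{2}$, and since the seminorm is dominated by the full $H^{\beta(\cdot)}$ norm this is immediate. Note this direction does not require $u \in V_{c}$; it holds on all of $H^{\beta(\cdot)}(\tilde{\Omega})$.

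For the lower bound $\|u\|_{H^{\beta(\cdot)}(\tilde{\Omega})} \leq C|||u|||$, I would combine the first estimate of Lemma \ref{upperEmbedding}, namely $|u|_{H^{\beta(\cdot)}(\tilde{\Omega})}^{2} \leq a_{*}^{-1}|||u|||^{2} + C\epsilon^{-2\beta_{*}}\|u\|_{L^{2}(\tilde{\Omega})}^{2}$, with the Poincaré-type inequality of Lemma \ref{poincareInequality}, which gives $\|u\|_{L^{2}(\tilde{\Omega})}^{2} \leq C|||u|||^{2}$ precisely for $u \in V_{c}(\tilde{\Omega})$. Substituting the Poincaré bound into the first estimate absorbs the $L^{2}$ term on the right, and adding $\|u\|_{L^{2}(\tilde{\Omega})}^{2}$ (again bounded by $|||u|||^{2}$ via Poincaré) yields the full $H^{\beta(\cdot)}$ norm on the left, dominated by a multiple of $|||u|||^{2}$. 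This is the step where the volume constraint $E_{c}(u)=0$ is genuinely used, and it is the only mildly non-routine point in the argument.

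For the final assertion on equivalence of the constrained spaces, I would argue set-theoretically using what has just been proved. If $u \in V_{c}(\tilde{\Omega})$, the lower bound shows $\|u\|_{H^{\beta(\cdot)}(\tilde{\Omega})} < \infty$, so $u \in H^{\beta(\cdot)}(\tilde{\Omega})$, and since $E_{c}(u)=0$ we have $u \in H_{c}^{\beta(\cdot)}(\tilde{\Omega})$. Conversely, if $u \in H_{c}^{\beta(\cdot)}(\tilde{\Omega})$, the upper bound gives $|||u||| < \infty$ so $u \in V_{c}(\tilde{\Omega})$. Combined with the two-sided norm inequality, this establishes that the two spaces coincide as sets and carry equivalent norms.

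The only real obstacle is conceptual rather than technical: one must recognize that Lemma \ref{upperEmbedding} alone only controls the seminorm modulo an $L^{2}$ remainder with an $\epsilon^{-2\beta_{*}}$ blow-up as $\epsilon \to 0$, and that this remainder is precisely what the Poincaré inequality on $V_{c}$ is designed to handle. Apart from this observation, the proof is a short bookkeeping exercise.
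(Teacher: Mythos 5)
Your proposal is correct and follows exactly the route the paper intends: the theorem is stated as an immediate consequence of Lemma \ref{upperEmbedding} and Lemma \ref{poincareInequality}, and your chaining of the two (using Poincar\'{e} on $V_{c}$ to absorb the $L^{2}$ remainder in the lower bound, and the $a^{*}$ estimate for the upper bound) is precisely that argument. No gaps.
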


With these preparations, and recalling the parabolic variational formulation given in Definition \ref{parabolicVariational},
we could obtain the following theorem by using the Lax-Milgram theorem and the Galerkin-type arguments.
For the Galerkin-type arguments, we refer to \cite{evans1998partial,Felsinger2015}.
Since the arguments are standard, we omit the details here.
\begin{theorem}\label{wellPosedness1}
Let $\beta(\cdot)$ to be a continuous function and there exist $\beta_{*}$ and $\beta^{*}$ such that
$0 < \beta_{*} \leq \beta(x) \leq \beta^{*} < 1$.
Assume that $f\in L^{2}(0,T; (H_{c}^{\beta(\cdot)}(\Omega\cup\Omega_{\mathcal{I}}))^{*})$ and $u_{0}\in L_{c}^{2}(\Omega\cup\Omega_{\mathcal{I}})$,
then the initial volume-constrained problem
(\ref{Equation1}) or (\ref{Equation2}) with nonlocal operator defined by (\ref{defineAlpha}) and (\ref{defineGamma})
has a unique solution
$u\in C(0,T;H_{c}^{\beta(\cdot)}(\Omega\cup\Omega_{\mathcal{I}}))\cap H^{1}(0,T;(H_{c}^{\beta(\cdot)}(\Omega\cup\Omega_{\mathcal{I}}))^{*})$.
\end{theorem}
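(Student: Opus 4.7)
The plan is to cast (\ref{Equation1})--(\ref{Equation2}) in the Gelfand triple $V_c := H_c^{\beta(\cdot)}(\tilde\Omega) \hookrightarrow L_c^2(\tilde\Omega) \hookrightarrow V_c^*$ and apply a Faedo--Galerkin scheme. Setting
\[
B(t;u,\phi) := \int_{\tilde\Omega}\int_{\tilde\Omega} \mathcal{D}^*(u)\cdot\bigl(a(t,x,y)\cdot\mathcal{D}^*(\phi)\bigr)\,dy\,dx,
\]
the assumption (\ref{aAssumption1}), together with Lemma \ref{upperEmbedding}, Lemma \ref{poincareInequality}, and Theorem \ref{equiNorm1}, shows that $B(t;\cdot,\cdot)$ is bilinear, symmetric, bounded and coercive on $V_c$ uniformly in $t\in[0,T]$. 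Thus the variable order enters only through the norm equivalence of Theorem \ref{equiNorm1}, after which the functional-analytic setting mirrors the constant-order case treated in \cite{du2012analysis,Felsinger2015}.

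First I would construct finite-dimensional approximations. Using the compact embedding $V_c \hookrightarrow L_c^2$ (which follows from Lemma \ref{embeddingVariableSpace} and the compactness of $H^{\beta_*}\hookrightarrow L^2$), I select an orthonormal basis $\{w_k\}_{k\geq 1}$ of $L_c^2$ with $w_k\in V_c$; eigenfunctions of the operator $\mathcal{D}(a(0,\cdot,\cdot)\cdot\mathcal{D}^*\cdot)$ are a convenient choice. Writing $V_m := \mathrm{span}\{w_1,\dots,w_m\}$ and $u_m(t) = \sum_{k=1}^m c_k^m(t) w_k$, I seek the unique solution of the linear ODE system
\[
(u_m'(t),w_j)_{L^2} + B(t;u_m(t),w_j) = \langle f(t), w_j\rangle, \qquad u_m(0) = P_m u_0,
\]
for $j=1,\dots,m$, where $P_m$ denotes $L_c^2$-projection onto $V_m$. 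The $C^1$ time-dependence of $a$ ensures classical solvability on $[0,T]$.

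Next, testing with $u_m$ itself and applying coercivity, Young's inequality, and Gronwall's lemma yields the uniform energy estimate
\[
\|u_m\|_{L^\infty(0,T;L_c^2)} + \|u_m\|_{L^2(0,T;V_c)} \leq C\bigl(\|u_0\|_{L^2} + \|f\|_{L^2(0,T;V_c^*)}\bigr),
\]
and boundedness of $B$ combined with the Galerkin equation implies $\|u_m'\|_{L^2(0,T;V_c^*)} \leq C$. Extracting weakly convergent subsequences $u_m \rightharpoonup u$ in $L^2(0,T;V_c)$ and $u_m' \rightharpoonup u'$ in $L^2(0,T;V_c^*)$, and passing to the limit in the Galerkin identity against each $w_j$ (then in $V_c$ by density), produces a $u$ satisfying Definition \ref{parabolicVariational}. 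The initial datum is attained via the continuous embedding $L^2(0,T;V_c)\cap H^1(0,T;V_c^*) \hookrightarrow C([0,T];L_c^2)$, and uniqueness follows by linearity: testing the homogeneous problem against its own solution and invoking coercivity forces $u\equiv 0$.

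The main obstacle is the claimed strong continuity $u\in C(0,T;H_c^{\beta(\cdot)})$, which is strictly stronger than the automatic continuity into the pivot space $L_c^2$ supplied by the Lions--Magenes lemma. To upgrade regularity I would test the Galerkin equation against $u_m'$, using the $C^1$ regularity of $a$ in $t$ and the bound $\sum_{i,j}\partial_t a_{ij}\xi_i\xi_j \leq a^*|\xi|^2$ from (\ref{aAssumption1}) to control $\tfrac{d}{dt}B(t;u_m,u_m)$; this produces uniform bounds on $u_m'$ in $L^2(0,T;L_c^2)$ and on $u_m$ in $L^\infty(0,T;V_c)$, which combined with an Aubin--Lions type argument yields the desired continuity into $V_c$. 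A secondary delicate point, preservation of the mean-zero constraint in the Neumann case, is absorbed by choosing the basis $\{w_k\}$ inside $V_c$ from the outset, so both (\ref{Equation1}) and (\ref{Equation2}) are handled in a single framework.
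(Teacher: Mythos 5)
The paper omits its own proof, appealing to the Lax--Milgram theorem and ``Galerkin-type arguments'' with references to \cite{evans1998partial,Felsinger2015}, so your Faedo--Galerkin construction --- coercivity/boundedness of $B(t;\cdot,\cdot)$ via Lemma \ref{upperEmbedding}, Lemma \ref{poincareInequality} and Theorem \ref{equiNorm1}, the energy estimate, the bound on $u_m'$ in $L^2(0,T;V_c^*)$, the weak-limit passage, attainment of the initial datum, and uniqueness by testing the homogeneous equation against its solution --- is exactly the intended route, and that part of your argument is correct.

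The gap is in the final upgrade to $u\in C(0,T;H_c^{\beta(\cdot)}(\tilde\Omega))$. Testing the Galerkin equation against $u_m'$ requires controlling $\langle f(t),u_m'(t)\rangle$ by $\|f(t)\|_{L^2}\|u_m'(t)\|_{L^2}$ and bounding the initial energy $B(0;u_m(0),u_m(0))$, i.e.\ it requires $f\in L^2(0,T;L^2(\tilde\Omega))$ and $u_0\in H_c^{\beta(\cdot)}(\tilde\Omega)$. Theorem \ref{wellPosedness1} assumes only $f\in L^2(0,T;V_c^*)$ and $u_0\in L_c^2(\tilde\Omega)$, under which this computation is unavailable; indeed, continuity into $V_c$ up to $t=0$ would force $u_0\in V_c$, so the membership $C(0,T;H_c^{\beta(\cdot)})$ cannot hold for general $u_0\in L_c^2$ and appears to be a slip in the statement (the hypotheses you need are precisely those of the separate regularity result, Theorem \ref{regualrityTheorem}). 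Under the stated data, what your Galerkin argument actually delivers is $u\in L^2(0,T;H_c^{\beta(\cdot)}(\tilde\Omega))\cap C([0,T];L_c^2(\tilde\Omega))\cap H^1(0,T;(H_c^{\beta(\cdot)}(\tilde\Omega))^*)$, and you should record that rather than claim more. A secondary point: even with the stronger hypotheses, uniform bounds on $u_m$ in $L^\infty(0,T;V_c)$ and on $u_m'$ in $L^2(0,T;L^2)$ give only weak continuity of $t\mapsto u(t)$ in $V_c$ together with strong continuity in $L^2$; strong continuity in $V_c$ needs the extra step of showing that $t\mapsto B(t;u(t),u(t))$ is continuous (via the energy identity and the $C^1$ dependence of $a$ on $t$), which Aubin--Lions compactness alone does not provide.
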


\subsection{Regularity improvement}

In this subsection, we provide more regularity properties of the solution constructed in Theorem \ref{wellPosedness1}.
Define $L^{2}(0,T; \tilde{H}^{2\beta(\cdot)}(\tilde{\Omega}))$ as a space which includes functions in the following set
\begin{align*}
\left\{ u\in L_{c}^{2}(\tilde{\Omega}) \,:\,
\int_{0}^{T}\int_{\Omega}\left| \int_{\tilde{\Omega}} (u(y,t)-u(x,t))\gamma(t,x,y) dy \right|^{2}dxdt < \infty
\right\}
\end{align*}
with
\begin{align*}
\|u\|_{L^{2}(0,T; \tilde{H}^{2\beta(\cdot)}(\tilde{\Omega}))}^{2} :=
\int_{0}^{T}\int_{\Omega}\left| \int_{\tilde{\Omega}} (u(y,t)-u(x,t))\gamma(t,x,y) dy \right|^{2}dxdt.
\end{align*}

Then we show the main result in the following theorem.
\begin{theorem}\label{regualrityTheorem}
Assume
\begin{align*}
u_{0}\in H_{c}^{\beta(\cdot)}(\tilde{\Omega}), \quad f\in L^{2}(0,T; L^{2}(\tilde{\Omega})).
\end{align*}
Suppose $u$ is the weak solution stated in Theorem \ref{wellPosedness1}, then we have
\begin{align}\label{regularitySol1}
u\in C(0,T;H_{c}^{\beta(\cdot)}(\tilde{\Omega})) \cap L^{2}(0,T; \tilde{H}^{2\beta(\cdot)}(\tilde{\Omega})),
\quad
\partial_{t}u \in L^{2}(0,T; L^{2}(\Omega)),
\end{align}
and in addition, we have the estimate
\begin{align}\label{regularityEs}
\begin{split}
\sup_{0\leq t\leq T}\|u(\cdot,t)\|_{H^{\beta(\cdot)}(\tilde{\Omega})}
+ & \|\partial_{t}u\|_{L^{2}(0,T; L^{2}(\Omega))}
+ \|u\|_{L^{2}(0,T; \tilde{H}^{2\beta(\cdot)}(\tilde{\Omega}))}     \\
\leq & C \left( \|f\|_{L^{2}(0,T;L^{2}(\tilde{\Omega}))} + \|u_{0}\|_{H^{\beta(\cdot)}(\tilde{\Omega})} \right),
\end{split}
\end{align}
where the constant $C$ depending on $T$, $\Omega$ and $a$.
\end{theorem}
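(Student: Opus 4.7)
The plan is to upgrade the weak solution produced by the Galerkin construction of Theorem \ref{wellPosedness1} by using the time derivative of the approximation as a test function, rather than the approximation itself. Concretely, pick a Galerkin basis $\{w_k\}_{k\geq 1}$ that is orthonormal in $L_c^{2}(\tilde{\Omega})$ and orthogonal in $H_c^{\beta(\cdot)}(\tilde{\Omega})$ (for instance, eigenfunctions of the self-adjoint operator $\mathcal{D}(\mathcal{D}^{\ast}\cdot)$ obtained by freezing $a$ at a convenient time, whose spectral theory is available thanks to the compact embedding in Lemma \ref{poincareInequality}). Write $u_m(t,x)=\sum_{k=1}^{m}d_k^m(t)w_k(x)$, so that $u_m'\in\mathrm{span}\{w_1,\dots,w_m\}\subset V_c(\tilde{\Omega})$ is an admissible test function in the Galerkin ODE system.

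Testing against $\partial_t u_m$ and using the symmetry $a=a^{T}$ with $a(t,x,y)=a(t,y,x)$ converts the bilinear form into a time derivative modulo a $\partial_t a$ term:
\begin{align*}
\int_{\Omega}|\partial_t u_m|^{2}\,dx + \tfrac{1}{2}\tfrac{d}{dt}\!\!\int_{\tilde{\Omega}}\!\!\int_{\tilde{\Omega}}\!\mathcal{D}^{\ast}u_m\cdot(a\cdot\mathcal{D}^{\ast}u_m)\,dy\,dx
= \!\int_{\Omega}\!f\,\partial_t u_m\,dx + \tfrac{1}{2}\!\!\int_{\tilde{\Omega}}\!\!\int_{\tilde{\Omega}}\!\mathcal{D}^{\ast}u_m\cdot(\partial_t a\cdot\mathcal{D}^{\ast}u_m)\,dy\,dx.
\end{align*}
Young's inequality absorbs $\int f\,\partial_t u_m$ into $\tfrac12\|\partial_t u_m\|_{L^2(\Omega)}^{2}+\tfrac12\|f\|_{L^2(\tilde{\Omega})}^{2}$, while the hypothesis (\ref{aAssumption1}) on $\partial_t a_{ij}$ bounds the last term by $C|||u_m|||^{2}$. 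Integrating in $t$ and applying Gronwall's lemma yields
\begin{align*}
\sup_{0\leq t\leq T}|||u_m(t)|||^{2}+\int_{0}^{T}\|\partial_t u_m\|_{L^2(\Omega)}^{2}\,dt
\leq C\bigl(\|f\|_{L^{2}(0,T;L^{2}(\tilde{\Omega}))}^{2}+|||u_0|||^{2}\bigr),
\end{align*}
where the initial bound uses $u_0\in H_c^{\beta(\cdot)}(\tilde{\Omega})$ together with Theorem \ref{equiNorm1}. Standard weak-$\ast$ compactness then transfers the estimate to the unique solution $u$, and Lemma \ref{upperEmbedding} together with Theorem \ref{equiNorm1} translates the energy norm into the $H^{\beta(\cdot)}$ norm.

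For the $L^{2}(0,T;\tilde{H}^{2\beta(\cdot)}(\tilde{\Omega}))$ bound, I would exploit the equation pointwise: having now shown $\partial_t u\in L^{2}(0,T;L^{2}(\Omega))$, the identity $\mathcal{D}(a\cdot\mathcal{D}^{\ast}u)=f-\partial_t u$ holds in $L^{2}(Q)$, and since $\mathcal{D}(a\cdot\mathcal{D}^{\ast}u)(x)=-2\int_{\tilde{\Omega}}(u(y)-u(x))\gamma(t,x,y)\,dy$, this is exactly the quantity whose $L^{2}(Q)$ norm defines $\|u\|_{L^{2}(0,T;\tilde{H}^{2\beta(\cdot)}(\tilde{\Omega}))}$. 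Strong continuity $u\in C([0,T];H_c^{\beta(\cdot)}(\tilde{\Omega}))$ follows from the improved time regularity via the standard argument: $u\in L^{\infty}(0,T;H_c^{\beta(\cdot)})\cap H^{1}(0,T;L^{2})$ implies $u$ is weakly continuous into $H_c^{\beta(\cdot)}$, and the energy identity derived above (now valid for $u$ itself) shows $t\mapsto|||u(t)|||$ is continuous, upgrading weak to strong continuity.

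The main technical obstacle is the handling of the $t$-dependence of the coefficient $a$: the naive choice of testing against $\partial_t u_m$ only produces a perfect time derivative when $a$ is symmetric and does not depend on $t$, so the extra term $\tfrac12\!\int\!\int\mathcal{D}^{\ast}u\cdot(\partial_t a\cdot\mathcal{D}^{\ast}u)\,dy\,dx$ must be absorbed into a Gronwall step, which is precisely what the second line of (\ref{aAssumption1}) is designed to permit. A secondary technicality is selecting the Galerkin basis so that each truncation $u_m$ lies in $V_c(\tilde{\Omega})$ and $\partial_t u_m$ is a legitimate test function; once that is arranged, passage to the limit and the derivation of the $\tilde{H}^{2\beta(\cdot)}$ regularity from the equation are essentially mechanical.
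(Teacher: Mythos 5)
Your proposal is correct and follows essentially the same route as the paper: the paper gives only the formal version of this energy computation (squaring $f=\partial_t u+\mathcal{D}(a\cdot\mathcal{D}^{*}u)$ and converting the cross term $2\int\!\!\int \mathcal{D}^{*}\partial_t u\cdot a\cdot\mathcal{D}^{*}u$ into a total time derivative of the energy plus a $\partial_t\gamma$ remainder), explicitly deferring the rigorous justification to a standard Galerkin argument of exactly the kind you spell out. The only cosmetic difference is that the paper extracts the $\|\mathcal{D}(a\cdot\mathcal{D}^{*}u)\|_{L^2}^2$ (i.e.\ $\tilde{H}^{2\beta(\cdot)}$) bound in the same identity, whereas you recover it afterwards from $\mathcal{D}(a\cdot\mathcal{D}^{*}u)=f-\partial_t u$; these are equivalent.
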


By Galerkin approximations, we can prove this theorem rigorously.
However, the proof seems to be standard, so we refer to the proof of Theorem 5 in Chapter 7 in \cite{evans1998partial} as a prototype
and only provide a simple formal derivation for concise.
\begin{proof}
From (\ref{Equation1}) or (\ref{Equation2}), we could obtain
\begin{align*}
\int_{\Omega}f^{2}dx = & \int_{\Omega} \left| \partial_{t}u + \mathcal{D}(a\cdot\mathcal{D}^{*}u) \right|^{2} dx    \\
= & \int_{\Omega}|\partial_{t}u|^{2} + |\mathcal{D}(a\cdot\mathcal{D}^{*}u)|^{2}
+ 2\mathcal{D}(a\cdot\mathcal{D}^{*}u)\cdot\partial_{t}u dx.
\end{align*}
Using the generalized nonlocal Green's first identity (\ref{nonlocalGreenFirst}), we have
\begin{align}\label{regularityIn1}
\begin{split}
\int_{\Omega}f^{2}dx = & \int_{\Omega}|\partial_{t}u|^{2} + |\mathcal{D}(a\cdot\mathcal{D}^{*}u)|^{2}dx
+ 2 \int_{\tilde{\Omega}}\int_{\tilde{\Omega}} \mathcal{D}^{*}\partial_{t}u \cdot a(t,x,y) \cdot \mathcal{D}^{*}udydx.
\end{split}
\end{align}
We have
\begin{align}\label{regularityIn2}
\begin{split}
& 2\int_{\tilde{\Omega}}\int_{\tilde{\Omega}}\partial_{t}(u(y,t)-u(x,t))\gamma(t,x,y)(u(y,t)-u(x,t))dydx \\
= & \frac{d}{dt}\int_{\tilde{\Omega}}\int_{\tilde{\Omega}}(u(y,t)-u(x,t))^{2}\gamma(t,x,y)dydx  \\
& \quad\quad\quad\quad\quad\quad\quad
- \int_{\tilde{\Omega}}\int_{\tilde{\Omega}}(u(y,t)-u(x,t))^{2}\partial_{t}\gamma(t,x,y)dydx.
\end{split}
\end{align}
Integrating (\ref{regularityIn1}) from $0$ to $T$, and using (\ref{regularityIn2}), we could obtain
\begin{align*}
& \int_{0}^{T}\int_{\Omega}|\partial_{t}u|^{2} + |\mathcal{D}(a\cdot\mathcal{D}^{*}u)|^{2}dxdt
+ \int_{\tilde{\Omega}}\int_{\tilde{\Omega}}(u(y,t)-u(x,t))^{2}\gamma(t,x,y)dydx    \\
& \leq \int_{0}^{T}\int_{\Omega}f^{2}dxdt +
\int_{\tilde{\Omega}}\int_{\tilde{\Omega}}(u(y,0)-u(x,0))^{2}\gamma(0,x,y)dydx  \\
& \quad\quad\quad\quad\quad\quad\quad\,\,\,
+ \int_{0}^{T}\int_{\tilde{\Omega}}\int_{\tilde{\Omega}}(u(y,t)-u(x,t))^{2}\partial_{t}\gamma(t,x,y)dydxdt
\end{align*}
From the above estimate and remembering the estimate of weak solutions, we obtain the desired result.
\end{proof}

\section{Carleman type estimate}\label{CarlemanEstimateSection}

In this section, we focus on a Carleman type estimate, which is an important tool for researches about inverse problems.
We denote
\begin{align}\label{LLLL}
L(u) = \partial_{t}u + \mathcal{D}(a\cdot\mathcal{D}^{*}u).
\end{align}
In order to obtain a Carleman estimate, we adopt the following weight function
\begin{align*}
\varphi(t) = e^{\lambda t}
\end{align*}
where $\lambda > 0$ is fixed suitably.
This weight function has been used in \cite{yamamoto2009carleman} to obtain a Carleman type estimate for some
integer order diffusion equations. Since our spatial derivative is an anisotropic variable-order fractional Laplace operator,
we use the nonlocal vector calculus compared with the integer order diffusion equation case.

\begin{theorem}\label{carlemanEstimateThe}
(Carleman estimate) We set $\varphi(t) = e^{\lambda t}$. Then there exists $\lambda_{0} > 0$
such that for any arbitrary $\lambda \geq \lambda_{0}$ we can choose a constant $s_{0}(\lambda) > 0$ satisfying:
there exists a constant $C = C(s_{0},\lambda_{0}) > 0$ such that
\begin{align*}
& \int_{Q}\left\{ \frac{1}{s\varphi}\left( |\partial_{t}u|^{2} + |\mathcal{D}(a\cdot\mathcal{D}^{*}u)|^{2} \right)
+ s\lambda^{2}\varphi u^{2} \right\}e^{2s\varphi}dxdt   \\
& \quad\quad\quad\quad
+ \lambda \int_{0}^{T}\int_{\tilde{\Omega}}\int_{\tilde{\Omega}}\frac{|v(y,t)-v(x,t)|^{2}}{|y-x|^{n+2\beta(x)}}e^{2s\varphi}dydxdt \\
& \quad\quad\quad\quad
\leq C \int_{\tilde{Q}}|L(u)|^{2}e^{2s\varphi}dxdt
+ C e^{C(\lambda)s}\left( \|u(\cdot,T)\|_{H^{\beta(\cdot)}(\tilde{\Omega})}^{2} + \|u(\cdot,0)\|_{H^{\beta(\cdot)}(\tilde{\Omega})}^{2} \right)
\end{align*}
for all $s > s_{0}$ and all
$u\in C(0,T;H^{\beta(\cdot)}(\tilde{\Omega}))\cap H^{1}(0,T;L^{2}(\Omega))\cap L^{2}(0,T; \tilde{H}^{2\beta(\cdot)}(\tilde{\Omega}))$ satisfying
\begin{align}\label{carelThe1}
u = 0 \quad \text{in }\Omega_{\mathcal{I}}\times (0,T)
\end{align}
or
\begin{align}\label{carelThe2}
\mathcal{N}(a\cdot\mathcal{D}^{*}u) = 0 \quad \text{in }\Omega_{\mathcal{I}}\times (0,T).
\end{align}
\end{theorem}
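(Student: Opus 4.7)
The plan is the standard conjugation-plus-split strategy for Carleman estimates, adapted to the nonlocal setting with the nonlocal Green's identity (\ref{nonlocalGreenFirst}) replacing classical integration by parts. Because $\varphi(t) = e^{\lambda t}$ depends only on $t$, $e^{\pm s\varphi}$ commutes with $\mathcal{D}^{*}$ in the space variables, so the substitution $v = e^{s\varphi}u$ yields the clean conjugated equation $Pv := e^{s\varphi}L(u) = \partial_{t}v - s\lambda\varphi v + \mathcal{D}(a\cdot\mathcal{D}^{*}v)$, with no commutator errors from the variable-order fractional Laplacian. This is exactly why the weight can, and must, be chosen space-independent here.

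Next I split $P = P_{1} + P_{2}$ with $P_{1}v := \mathcal{D}(a\cdot\mathcal{D}^{*}v) - s\lambda\varphi v$ (spatially self-adjoint) and $P_{2}v := \partial_{t}v$, and expand $\|Pv\|_{L^{2}(Q)}^{2} = \|P_{1}v\|^{2} + \|P_{2}v\|^{2} + 2\langle P_{1}v, P_{2}v\rangle$. The cross term is the heart of the argument. For the piece $2\int_{Q}\mathcal{D}(a\cdot\mathcal{D}^{*}v)\partial_{t}v\,dx\,dt$ I apply (\ref{nonlocalGreenFirst}); the volume-constrained surface term $\int_{\Omega_{\mathcal{I}}}\partial_{t}v\cdot\mathcal{N}(a\cdot\mathcal{D}^{*}v)\,dx$ vanishes in both cases (in (\ref{carelThe1}), $v = 0$ in $\Omega_{\mathcal{I}}$ because $u$ does; in (\ref{carelThe2}), $\mathcal{N}(a\cdot\mathcal{D}^{*}v) = e^{s\varphi}\mathcal{N}(a\cdot\mathcal{D}^{*}u) = 0$), leaving a perfect time derivative $\tfrac{1}{2}\partial_{t}\!\int\!\int(\mathcal{D}^{*}v)\cdot a\cdot(\mathcal{D}^{*}v)\,dy\,dx$ plus a $\partial_{t}a$ remainder controlled by (\ref{aAssumption1}). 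The piece $-2s\lambda\int_{Q}\varphi v\partial_{t}v$ is an ordinary $t$-integration by parts producing $s\lambda^{2}\!\int_{Q}\varphi v^{2}$ plus time-boundary values. Collecting everything produces the key identity $\|P_{1}v\|^{2} + \|P_{2}v\|^{2} + s\lambda^{2}\!\int_{Q}\varphi v^{2} = \|Pv\|^{2} + \mathcal{B}(v) + \int_{0}^{T}\!\int\!\int(\partial_{t}a)|\mathcal{D}^{*}v|^{2}$, where $\mathcal{B}(v)$ gathers the $t = 0, T$ boundary terms which, once the conjugation is undone and Theorem \ref{equiNorm1} is invoked, become the $e^{C(\lambda)s}(\|u(\cdot,T)\|_{H^{\beta(\cdot)}}^{2} + \|u(\cdot,0)\|_{H^{\beta(\cdot)}}^{2})$ contribution of the statement.

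To install the nonlocal seminorm term $\lambda\int_{0}^{T}|v|_{H^{\beta(\cdot)}(\tilde\Omega)}^{2}\,dt$ on the left-hand side, I independently test $Pv$ against $2v$; nonlocal Green gives $2\int_{0}^{T}\!\int\!\int|\mathcal{D}^{*}v|_{a}^{2} = \int_{Q}2v\cdot Pv + 2s\lambda\!\int_{Q}\varphi v^{2} - [\!\int v^{2}dx]_{0}^{T}$, and a Cauchy--Schwarz with parameter tuned to $\lambda$, combined with Lemma \ref{upperEmbedding} to translate between $|v|_{H^{\beta(\cdot)}}$ and the $a$-weighted nonlocal energy seminorm, yields $\lambda\int_{0}^{T}|v|_{H^{\beta(\cdot)}}^{2} \leq C\|Pv\|^{2} + Cs\lambda^{2}\!\int\varphi v^{2} + |\mathcal{B}(v)|$. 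Substituting this back into the identity of the previous paragraph absorbs the $\int\int(\partial_{t}a)|\mathcal{D}^{*}v|^{2}$ remainder for $\lambda$ large, producing $s\lambda^{2}\!\int\varphi v^{2} + \lambda\!\int_{0}^{T}|v|_{H^{\beta(\cdot)}}^{2} \leq C\|Pv\|^{2} + |\mathcal{B}(v)|$. The $\tfrac{1}{s\varphi}|\mathcal{D}(a\cdot\mathcal{D}^{*}v)|^{2}$ and $\tfrac{1}{s\varphi}|\partial_{t}v|^{2}$ pieces then follow cheaply from $|\mathcal{D}(a\cdot\mathcal{D}^{*}v)|^{2} \leq 3(|Pv|^{2} + |\partial_{t}v|^{2} + s^{2}\lambda^{2}\varphi^{2}v^{2})$ since $s^{2}\lambda^{2}\varphi^{2}v^{2}/(s\varphi) = s\lambda^{2}\varphi v^{2}$ is already controlled. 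Passing from $v$ back to $u$ uses $e^{s\varphi}\partial_{t}u = \partial_{t}v - s\lambda\varphi v$ and $(v(y,t) - v(x,t))^{2} = e^{2s\varphi(t)}(u(y,t) - u(x,t))^{2}$, again because $\varphi$ is $x$-independent.

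The principal obstacle is the cross term of the second paragraph: in the absence of a pointwise product rule for $\mathcal{D}^{*}$, the identity must be engineered purely from the symmetry of $a$ and from (\ref{nonlocalGreenFirst}), and this is viable only because the weight commutes with the nonlocal kernel in $(x,y)$; a space-dependent weight would generate genuine nonlocal commutators with no obvious local cancellation. A quantitative secondary issue is the ordering of parameters: one must first fix $\lambda_{0}$ large enough, depending on $a_{*}$ and $a^{*}$, so that the $\int\int(\partial_{t}a)|\mathcal{D}^{*}v|^{2}$ remainder is absorbed by the freshly produced $\lambda\!\int|v|_{H^{\beta(\cdot)}}^{2}$, and only then choose $s_{0}(\lambda)$ large enough so that $s\lambda^{2}\!\int\varphi v^{2}$ dominates the corresponding boundary pollution; this is exactly the two-stage quantification $\lambda_{0}$, $s_{0}(\lambda)$ encoded in the theorem.
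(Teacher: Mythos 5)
Your proposal is correct and follows essentially the same route as the paper: conjugate with the space-independent weight $v=e^{s\varphi}u$, expand $\|Pv\|_{L^{2}(Q)}^{2}$ so that the cross term $2\int_{Q}\partial_{t}v\,(\mathcal{D}(a\cdot\mathcal{D}^{*}v)-s\lambda\varphi v)\,dx\,dt$ is handled by the nonlocal Green's identity (with the volume-constraint killing the interaction term) plus a time integration by parts, then test $Pv$ against a $\lambda$-weighted multiple of $v$ to generate the $H^{\beta(\cdot)}$ seminorm, absorb the lower-order and $\partial_{t}a$ remainders by taking $\lambda$ and then $s$ large, and recover the $\tfrac{1}{s\varphi}(|\partial_{t}u|^{2}+|\mathcal{D}(a\cdot\mathcal{D}^{*}u)|^{2})$ terms from the equation itself. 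The only differences from the paper's proof are cosmetic (testing against $2v$ rather than $\lambda v$, and retaining $\|P_{1}v\|^{2}$ explicitly before discarding it).
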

\begin{proof}
Set
\begin{align*}
v = e^{s\varphi}u, \quad\quad Pv = e^{s\varphi}L(e^{-s\varphi}v) = e^{s\varphi}f.
\end{align*}
Assume that $u|_{\Omega_{\mathcal{I}}} = 0$ or $\mathcal{N}(a\cdot\mathcal{D}^{*}u)|_{\Omega_{\mathcal{I}}} = 0$.
Obviously, we obtain
\begin{align}\label{carest1}
Pv = \partial_{t}v - (s\lambda\varphi v - \mathcal{D}(a\cdot \mathcal{D}^{*}v)) = e^{s\varphi}f.
\end{align}
In addition, we have
\begin{align}
\|e^{s\varphi}f\|_{L^{2}(Q)}^{2} = & \int_{Q}|\partial_{t}v|^{2}dxdt +
2\int_{Q}\partial_{t}v \left( -s\lambda\varphi v + \mathcal{D}(a\cdot\mathcal{D}^{*}v) \right)dxdt \nonumber  \\
& + \int_{Q} |s\lambda\varphi v - \mathcal{D}(a\cdot\mathcal{D}^{*}v)|^{2}dxdt  \nonumber  \\
\geq & \int_{Q}|\partial_{t}v|^{2}dxdt + 2\int_{Q}\partial_{t}v \mathcal{D}(a\cdot\mathcal{D}^{*}v) dxdt
 + 2\int_{Q}\partial_{t}v (-s\lambda\varphi v)dxdt   \nonumber \\
\equiv & \int_{Q}|\partial_{t}v|^{2}dxdt + \text{I}_{1} + \text{I}_{2}.
\end{align}
Thus
\begin{align}\label{carest2}
\int_{Q}f^{2}e^{2s\varphi}dxdt \geq \text{I}_{1} + \text{I}_{2}
\end{align}
and
\begin{align}\label{carest13}
\int_{Q}|\partial_{t}v|^{2}dxdt \leq \int_{Q}f^{2}e^{2s\varphi}dxdt + |\text{I}_{1} + \text{I}_{2}|.
\end{align}
In the following, $C_{j}>0$ ($j\in\mathbb{N}$) denote generic constants which are independent of $s,\lambda$.
Since $s,\lambda$ are assumed to be a large enough constant, without loss of generality, we can assume $s>1$ and $\lambda >1$.
For $\text{I}_{1}$, we have
\begin{align}\label{carest3}
|\text{I}_{1}| \geq & \left| 2\int_{0}^{T}\int_{\Omega}\partial_{t}v \mathcal{D}(a\cdot\mathcal{D}^{*}v)dxdt \right| \nonumber \\
= & \left| 2\int_{0}^{T}\int_{\tilde{\Omega}}\int_{\tilde{\Omega}} \mathcal{D}^{*}\partial_{t}v \cdot
a\cdot \mathcal{D}^{*}v dydxdt + 2\int_{0}^{T}\int_{\Omega_{\mathcal{I}}}\partial_{t}v \mathcal{N}(a\cdot\mathcal{D}^{*}v)dxdt \right|
\nonumber \\
= & \left| 2\int_{0}^{T}\int_{\tilde{\Omega}}\int_{\tilde{\Omega}} (\partial_{t}v(y,t)-\partial_{t}v(x,t))
\gamma(t,x,y) (v(y,t) - v(x,t)) dydxdt \right| \nonumber \\
= & \left| \int_{0}^{T}\int_{\tilde{\Omega}}\int_{\tilde{\Omega}}
\partial_{t}\left[(v(y,t)-v(x,t))^{2}\right] \gamma(t,x,y) dydxdt \right| \nonumber \\
= & \Bigg| -\int_{0}^{T}\int_{\tilde{\Omega}}\int_{\tilde{\Omega}}\partial_{t}\gamma(t,x,y) (v(y,t)-v(x,t))^{2}  dydxdt \nonumber \\
& \quad\quad\quad\quad\quad\quad\quad\quad\quad
+ \int_{\tilde{\Omega}}\int_{\tilde{\Omega}} \gamma(t,x,y)(v(y,t)-v(x,t))^{2}dydx \Big|_{t=0}^{t=T} \Bigg|  \nonumber \\
\leq & C_{1}\|v\|_{L^{2}(0,T;H^{\beta(\cdot)}(\tilde{\Omega}))}^{2} + C_{1}\|v(\cdot,T)\|_{H^{\beta(\cdot)}(\tilde{\Omega})}^{2}
+ C_{1}\|v(\cdot,0)\|_{H^{\beta(\cdot)}(\tilde{\Omega})}^{2},
\end{align}
where (\ref{aAssumption1}) has been used for the last inequality. For $\text{I}_{2}$, we have
\begin{align}\label{carest4}
\begin{split}
\text{I}_{2} = & -s\lambda \int_{Q}2\partial_{t}v \, v \, \varphi dxdt = s\lambda \int_{Q}v^{2}\partial_{t}\varphi dxdt
- s\lambda \Big( \int_{\Omega}\varphi v^{2}dx\Big) \Big|_{t=0}^{t=T}    \\
\geq & s\lambda^{2}\int_{Q}\varphi v^{2}dxdt - s\lambda \int_{\Omega}
\left( e^{\lambda T}|v(x,T)|^{2} + |v(x,0)|^{2} \right)dx.
\end{split}
\end{align}
From (\ref{carest2}), (\ref{carest3}) and (\ref{carest4}), we obtain
\begin{align}\label{carest5}
\begin{split}
\|e^{s\varphi}f\|_{L^{2}(Q)}^{2} \geq & s\lambda^{2}\int_{Q}\varphi v^{2}dxdt - C_{1}\|v\|_{L^{2}(0,T;H^{\beta(\cdot)}(\tilde{\Omega}))}^{2}
- C_{1}\|v(\cdot,T)\|_{H^{\beta(\cdot)}(\tilde{\Omega})}^{2}    \\
& - C_{1}\|v(\cdot,0)\|_{H^{\beta(\cdot)}(\tilde{\Omega})}^{2}
- s\lambda\int_{\tilde{\Omega}}\left( e^{\lambda T}|v(x,T)|^{2} + |v(x,0)|^{2} \right)dx.
\end{split}
\end{align}
In the following, we estimate $\|v\|_{L^{2}(0,T;H^{\beta(\cdot)}(\tilde{\Omega}))}^{2}$.
We have
\begin{align}\label{carest6}
\begin{split}
\int_{Q}(Pv)v dxdt = & \int_{Q}v\partial_{t}vdxdt - \int_{Q}s\lambda \varphi v^{2}dxdt
+ \int_{Q}v \mathcal{D}(a\cdot\mathcal{D}^{*}v)dxdt \\
\equiv & J_{1} + J_{2} + J_{3}.
\end{split}
\end{align}
For $J_{1}$, we obtain
\begin{align*}
|J_{1}| = & \left| \int_{Q}v\partial_{t}v dxdt \right| = \left| \frac{1}{2}\int_{Q}\partial_{t}(v^{2}) dxdt \right|
\leq \frac{1}{2}\int_{\tilde{\Omega}}\left( |v(x,T)|^{2} + |v(x,0)|^{2} \right)dx.
\end{align*}
For $J_{2}$, we have
\begin{align*}
|J_{2}| = \left| -\int_{Q}s\lambda\varphi v^{2}dxdt \right| \leq C_{2} \int_{Q} s\lambda\varphi v^{2}dxdt.
\end{align*}
At last, for $J_{3}$, we have
\begin{align*}
J_{3} = & \int_{0}^{T}\int_{Q}v\cdot\mathcal{D}(a\cdot\mathcal{D}^{*}v) dxdt
= \int_{0}^{T}\int_{\tilde{\Omega}}\int_{\tilde{\Omega}}\mathcal{D}^{*}v \cdot a(t,x,y) \cdot \mathcal{D}^{*}v dydxdt \\
= & \int_{0}^{T}\int_{\tilde{\Omega}}\int_{\tilde{\Omega}} (v(y,t)-v(x,t))^{2}\gamma(t,x,y)dydxdt   \\
\geq & a_{*}\int_{0}^{T}\int_{\tilde{\Omega}}\int_{\tilde{\Omega}} \frac{(v(y,t)-v(x,t))^{2}}{|y-x|^{n+2\beta(x)}}dydxdt.
\end{align*}
From (\ref{carest6}) and the above estimates on $J_{1}$, $J_{2}$ and $J_{3}$, we obtain
\begin{align}\label{carest7}
\begin{split}
\int_{Q}\lambda (Pv)v dxdt \geq &
a_{*}\int_{0}^{T}\int_{\tilde{\Omega}}\int_{\tilde{\Omega}} \lambda\frac{(v(y,t)-v(x,t))^{2}}{|y-x|^{n+2\beta(x)}}dydxdt
- C_{2}\int_{Q}s\lambda^{2}\varphi v^{2}dxdt    \\
& - \frac{1}{2}\lambda\int_{\tilde{\Omega}}\left( |v(x,T)|^{2} + |v(x,0)|^{2} \right)dx.
\end{split}
\end{align}
On the other hand,
\begin{align}\label{carest8}
\begin{split}
\int_{Q}\lambda (Pv)v dxdt \leq & \|Pv\|_{L^{2}(Q)} \left(\lambda \|v\|_{L^{2}(Q)} \right)
\leq \frac{1}{2}\|Pv\|_{L^{2}(Q)}^{2} + \frac{\lambda^{2}}{2}\|v\|_{L^{2}(Q)}^{2}   \\
\leq & \frac{1}{2}\|fe^{s\varphi}\|_{L^{2}(Q)}^{2} + \frac{\lambda^{2}}{2}\|v\|_{L^{2}(Q)}^{2}.
\end{split}
\end{align}
Hence, (\ref{carest7}) and (\ref{carest8}) yield
\begin{align*}
a_{*}\int_{0}^{T}\int_{\tilde{\Omega}}\int_{\tilde{\Omega}}\lambda\frac{(v(y,t)-v(x,t))^{2}}{|y-x|^{n+2\beta(x)}}dydxdt
\leq & C_{2}\int_{Q}s\lambda^{2}\varphi v^{2}dxdt + \frac{1}{2}\|fe^{s\varphi}\|_{L^{2}(\tilde{Q})}^{2} \\
+ \frac{\lambda^{2}}{2}\|v\|_{L^{2}(Q)}^{2} & +
\frac{1}{2}\lambda\int_{\tilde{\Omega}} \left( |v(x,T)|^{2} + |v(x,0)|^{2} \right)dx.
\end{align*}
Estimating the first term on the right-hand side by (\ref{carest5}), we obtain
\begin{align}\label{carest9}
a_{*}\int_{0}^{T}\int_{\tilde{\Omega}}\int_{\tilde{\Omega}}\lambda & \frac{(v(y,t)-v(x,t))^{2}}{|y-x|^{n+2\beta(x)}} dydxdt
\leq C_{3}\|fe^{s\varphi}\|_{L^{2}(\tilde{Q})}^{2} + C_{3}\|v\|_{L^{2}(0,T; H^{\beta(\cdot)}(\tilde{\Omega}))}^{2} \nonumber \\
& \quad\quad\quad\quad
+ C_{3}\lambda^{2}\|v\|_{L^{2}(Q)}^{2} + C_{3}\lambda \left( \|v(\cdot,T)\|_{L^{2}(\tilde{\Omega})}^{2}
+ \|v(\cdot,0)\|_{L^{2}(\tilde{\Omega})}^{2} \right)   \nonumber  \\
& \quad\quad\quad\quad
+ C_{3}(\|v(\cdot,T)\|_{H^{\beta(\cdot)}(\tilde{\Omega})}^{2} + \|v(\cdot,0)\|_{H^{\beta(\cdot)}(\tilde{\Omega})}^{2} ) \nonumber \\
& \quad\quad\quad\quad
+ C_{3}s\lambda (e^{\lambda T}\|v(\cdot,T)\|_{L^{2}(\tilde{\Omega})}^{2} + \|v(\cdot,0)\|_{L^{2}(\tilde{\Omega})}^{2}).
\end{align}
Considering both (\ref{carest5}) and (\ref{carest9}), we obtain
\begin{align}\label{carest10}
s\lambda^{2}\int_{Q}\varphi v^{2} dxdt \, + \, &
a_{*}\int_{0}^{T}\int_{\tilde{\Omega}}\int_{\tilde{\Omega}} \lambda\frac{|v(y,t)-v(x,t)|^{2}}{|y-x|^{n+2\beta(x)}}dydxdt  \nonumber \\
\leq & C_{4}\|fe^{s\varphi}\|_{L^{2}(\tilde{Q})}^{2} + C_{4}\|v\|_{H^{\beta(\cdot)}(\tilde{Q})}^{2}
+ C_{4}\lambda^{2}\|v\|_{L^{2}(Q)}^{2}     \nonumber \\
& + C_{4}\left(\|v(\cdot,T)\|_{H^{\beta(\cdot)}(\tilde{\Omega})}^{2} + \|v(\cdot,0)\|_{H^{\beta(\cdot)}(\tilde{\Omega})}^{2}\right) \nonumber \\
& + C_{4}s\lambda \left(e^{\lambda T}\|v(\cdot,T)\|_{L^{2}(\tilde{\Omega})}^{2} + \|v(\cdot,0)\|_{L^{2}(\tilde{\Omega})}^{2}\right).
\end{align}
Now, we take $s>0$, $\lambda>0$ large to absorb the second and third terms on the right-hand side into the left-hand side, then
we obtain
\begin{align*}
\int_{Q}s\lambda^{2}\varphi v^{2}dxdt + &
\int_{0}^{T}\int_{\tilde{\Omega}}\int_{\tilde{\Omega}}\lambda\frac{|v(y,t)-v(x,t)|^{2}}{|y-x|^{n+2\beta(x)}}dydxdt \\
\leq & C_{5}\|fe^{s\varphi}\|_{L^{2}(\tilde{Q})}^{2}
+ C_{5}e^{C(\lambda)s}\left( \|v(\cdot,T)\|_{H^{\beta(\cdot)}(\tilde{\Omega})}^{2} + \|v(\cdot,0)\|_{H^{\beta(\cdot)}(\tilde{\Omega})}^{2} \right).
\end{align*}
Since $v = e^{s\varphi}u$, in addition, we have
\begin{align}\label{carest11}
\int_{Q}s\lambda^{2}\varphi u^{2}e^{2s\varphi} & dxdt +
\int_{0}^{T}\int_{\tilde{\Omega}}\int_{\tilde{\Omega}}\lambda\frac{|u(y,t)-u(x,t)|^{2}}{|y-x|^{n+2\beta(x)}}e^{2s\varphi}dydxdt \nonumber \\
\leq & C_{5}\|fe^{s\varphi}\|_{L^{2}(\tilde{Q})}^{2} +
C_{5}e^{C(\lambda)s}\left( \|u(\cdot,T)\|_{H^{\beta(\cdot)}(\tilde{\Omega})}^{2} + \|u(\cdot,0)\|_{H^{\beta(\cdot)}(\tilde{\Omega})}^{2} \right).
\end{align}
Since $\partial_{t}u = -s\lambda \varphi e^{-s\varphi} v + e^{-s\varphi}\partial_{t}v$, we obtain
\begin{align}\label{carest12}
\frac{1}{s\varphi}|\partial_{t}u|^{2}e^{2s\varphi} \leq 2s\lambda^{2}\varphi v^{2} + \frac{2}{s\varphi}|\partial_{t}v|^{2}.
\end{align}
By (\ref{carest13}), (\ref{carest11}) and estimates about $\text{I}_{1}, \text{I}_{2}$, we find
\begin{align}\label{carest14}
\begin{split}
\int_{Q}\frac{1}{s\varphi}|\partial_{t}u|^{2}e^{2s\varphi}dxdt \leq &
C \int_{\tilde{Q}}f^{2}e^{2s\varphi}dxdt  \\
& + C e^{C(\lambda)s}\left( \|u(\cdot,T)\|_{H^{\beta(\cdot)}(\tilde{\Omega})}^{2} + \|u(\cdot,0)\|_{H^{\beta(\cdot)}(\tilde{\Omega})}^{2} \right).
\end{split}
\end{align}
From $\mathcal{D}(a\cdot\mathcal{D}^{*}u) = f - \partial_{t}u$, we could finish the proof by using (\ref{carest11}) and (\ref{carest14}).
\end{proof}

\begin{remark}\label{ynzhengtiaojian}
From Theorem \ref{regualrityTheorem}, we know that the regularity condition required in Theorem \ref{carlemanEstimateThe}
could be satisfied.
\end{remark}

\begin{remark}\label{lowerOrderTerm}
From Section 5.3 in \cite{du2012analysis}, we know that a variable-order space fractional advection-diffusion equation could be formulated as follow
\begin{align}\label{variSpace1}
\begin{split}
\partial_{t}u + \mathcal{D}(a\cdot\mathcal{D}^{*}u) + \frac{1}{2}\mathcal{D}(\mu u) = f \quad \forall \, x\in \Omega, \, t>0.
\end{split}
\end{align}
And we have
\begin{align*}
\mathcal{D}(\mu u) = u\mathcal{D}(\mu) - \int_{\mathbb{R}^{n}}\mu\cdot\mathcal{D}^{*}(u)dy.
\end{align*}
Since
\begin{align*}
& \int_{\tilde{\Omega}}\int_{\tilde{\Omega}}-\mu\frac{(u(y,t)-u(x,t))(y-x)}{|y-x|^{n/2+\beta(x)+1}}dydx \\
& \quad\quad\quad\quad
\leq \left(\int_{\tilde{\Omega}}\int_{\tilde{\Omega}}\mu^{2}dydx\right)^{1/2}
\left( \int_{\tilde{\Omega}}\int_{\tilde{\Omega}}\frac{|u(y,t)-u(x,t)|^{2}}{|y-x|^{n+2\beta(x)}}dydx \right)^{1/2},
\end{align*}
by some simple calculations, we can easily see that if we take $s,\lambda$ large enough, a Carleman estimate also holds for
the variable-order space fractional advection-diffusion equation
as illustrated for the integer-order diffusion equations in \cite{yamamoto2009carleman}.
\end{remark}

In the last part of this section, we show the following theorem which could be proved similar to Theorem \ref{carlemanEstimateThe}.
\begin{theorem}\label{carlemanEstimateThe11}
We set $\varphi(t) = e^{-\lambda t}$. Then there exists $\lambda_{0} > 0$
such that for any arbitrary $\lambda \geq \lambda_{0}$ we can choose a constant $s_{0}(\lambda) > 0$ satisfying:
there exists a constant $C = C(s_{0},\lambda_{0}) > 0$ such that
\begin{align*}
& \int_{Q}\left\{ \frac{1}{s\varphi}\left( |\partial_{t}u|^{2} + |\mathcal{D}(a\cdot\mathcal{D}^{*}u)|^{2} \right)
+ s\lambda^{2}\varphi u^{2} \right\}e^{2s\varphi}dxdt   \\
& \quad\quad\quad\quad
+ \lambda \int_{0}^{T}\int_{\tilde{\Omega}}\int_{\tilde{\Omega}}\frac{|v(y,t)-v(x,t)|^{2}}{|y-x|^{n+2\beta(x)}}e^{2s\varphi}dydxdt \\
& \quad\quad\quad\quad
\leq C \int_{\tilde{Q}}|L(u)|^{2}e^{2s\varphi}dxdt
+ C e^{C(\lambda)s}\|u(\cdot,0)\|_{H^{\beta(\cdot)}(\tilde{\Omega})}^{2} \\
& \quad\quad\quad\quad\quad
+ C \int_{\Omega_{\mathcal{I}}\times(0,T)}s\lambda (|u|+|\partial_{t}u|)|\mathcal{N}(a\cdot\mathcal{D}^{*}u)|e^{2s\varphi}dxdt
\end{align*}
for all $s > s_{0}$ and all
$u\in C(0,T;H^{\beta(\cdot)}(\tilde{\Omega}))\cap H^{1}(0,T;L^{2}(\Omega))\cap L^{2}(0,T; \tilde{H}^{2\beta(\cdot)}(\tilde{\Omega}))$ satisfying
\begin{align}\label{carelThe11}
u(\cdot,T) = 0 \quad \text{in }\tilde{\Omega}.
\end{align}
\end{theorem}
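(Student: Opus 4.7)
My plan is to follow the proof of Theorem \ref{carlemanEstimateThe} almost line by line, adjusting for three structural differences: (i) the weight $\varphi(t)=e^{-\lambda t}$ has $\partial_t\varphi=-\lambda\varphi$ instead of $+\lambda\varphi$, (ii) the terminal condition $u(\cdot,T)=0$ replaces the volume constraint on $\Omega_{\mathcal I}$, so the time-boundary contributions at $t=T$ vanish while those at $t=0$ remain, and (iii) without a Dirichlet or Neumann volume constraint the nonlocal Green's identity (\ref{nonlocalGreenFirst}) leaves behind a genuine boundary integral over $\Omega_{\mathcal I}$, which will furnish the extra term on the right-hand side.

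Concretely, I set $v=e^{s\varphi}u$ and compute $Pv=e^{s\varphi}L(e^{-s\varphi}v)=\partial_t v+s\lambda\varphi v+\mathcal D(a\cdot\mathcal D^{*}v)$; the sign in front of $s\lambda\varphi v$ is opposite to (\ref{carest1}) precisely because $\partial_t\varphi$ switched sign. Dropping the nonnegative square $\|s\lambda\varphi v+\mathcal D(a\cdot\mathcal D^{*}v)\|_{L^2(Q)}^2$ from $\|Pv\|_{L^2(Q)}^2$ yields the analogue of (\ref{carest2}) with cross terms $\text{I}_1=2\int_Q\partial_t v\,\mathcal D(a\cdot\mathcal D^{*}v)\,dxdt$ and $\text{I}_2'=2s\lambda\int_Q\partial_t v\,v\,\varphi\,dxdt$. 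Integrating $\text{I}_2'$ by parts in $t$ now gives $s\lambda^{2}\int_Q\varphi v^2\,dxdt$ with the \emph{correct} positive sign (this is the whole reason for reversing the weight), together with time-boundary contributions; using $v(\cdot,T)=0$ only $-s\lambda\varphi(0)\int_{\Omega}v(x,0)^2\,dx$ survives, which is absorbed into the $e^{C(\lambda)s}\|u(\cdot,0)\|_{H^{\beta(\cdot)}(\tilde\Omega)}^2$ term on the right.

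The term $\text{I}_1$ is where the new boundary integral appears. Applying the nonlocal Green's first identity (\ref{nonlocalGreenFirst}) \emph{without} discarding the interaction integral, I get
\begin{align*}
\text{I}_1 = 2\!\int_0^T\!\!\int_{\tilde\Omega}\!\!\int_{\tilde\Omega}\!\mathcal D^{*}\partial_t v\cdot a\cdot\mathcal D^{*}v\,dydx\,dt
+2\!\int_0^T\!\!\int_{\Omega_{\mathcal I}}\!\partial_t v\,\mathcal N(a\cdot\mathcal D^{*}v)\,dx\,dt.
\end{align*}
The first piece is handled exactly as in (\ref{carest3}): recognise it as $\int_0^T\frac{d}{dt}\!\int\!\int\gamma(v(y)-v(x))^2\,dydx\,dt$ minus a term with $\partial_t\gamma$, and use $v(\cdot,T)=0$ to kill the $t=T$ endpoint, leaving a $\|v\|_{L^2(0,T;H^{\beta(\cdot)})}^2$ term plus an $\|v(\cdot,0)\|_{H^{\beta(\cdot)}(\tilde\Omega)}^2$ term. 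The second piece, after converting back via $\partial_t v = e^{s\varphi}(\partial_t u - s\lambda\varphi u)$ and $\mathcal N(a\cdot\mathcal D^{*}v)=e^{s\varphi}\mathcal N(a\cdot\mathcal D^{*}u)$, is bounded by $C\int_{\Omega_{\mathcal I}\times(0,T)}s\lambda(|u|+|\partial_t u|)|\mathcal N(a\cdot\mathcal D^{*}u)|e^{2s\varphi}\,dx\,dt$, which is exactly the extra term in the statement.

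Having assembled the lower bound on $\|e^{s\varphi}f\|_{L^2(Q)}^2$, I then run the second half of the argument in Theorem \ref{carlemanEstimateThe}: test the equation by $\lambda v$, use the Green identity again (again picking up a boundary term on $\Omega_{\mathcal I}$, of the same form already allowed on the right-hand side) to generate the $\lambda\int\!\int|v(y)-v(x)|^2|y-x|^{-n-2\beta(x)}$ contribution via positivity of $a$, and use Cauchy--Schwarz on $\int\lambda(Pv)v$. Combining the two bounds and choosing $\lambda,s$ large enough to absorb the intermediate $\|v\|_{L^2(0,T;H^{\beta(\cdot)})}^2$ and $\lambda^{2}\|v\|_{L^2(Q)}^2$ terms into the main $s\lambda^{2}\varphi v^{2}$ term (this is the standard pointwise inequality $s\lambda^{2}\varphi\geq C\lambda^{2}$ used in (\ref{carest10})), and finally recovering $\partial_t u$ and $\mathcal D(a\cdot\mathcal D^{*}u)$ via (\ref{carest12}) and the equation $\mathcal D(a\cdot\mathcal D^{*}u)=f-\partial_t u$, gives the claim. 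The main obstacle is purely bookkeeping: keeping the sign of every boundary contribution straight, and verifying that each unwanted term on the right is either killed by $v(\cdot,T)=0$, dominated by the stated $\|u(\cdot,0)\|_{H^{\beta(\cdot)}(\tilde\Omega)}^2$ term at the price of the factor $e^{C(\lambda)s}$, or of the form already displayed as the $\Omega_{\mathcal I}$-integral.
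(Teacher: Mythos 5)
Your proposal is correct and is essentially the argument the paper intends: the paper gives no proof of Theorem \ref{carlemanEstimateThe11} beyond the remark that it is ``similar to'' Theorem \ref{carlemanEstimateThe}, and your three adjustments --- the sign flip in $\partial_t\varphi$ restoring the positive $s\lambda^{2}\int_Q\varphi v^{2}$ term, the terminal condition $v(\cdot,T)=0$ killing the $t=T$ endpoint contributions, and the surviving interaction integral over $\Omega_{\mathcal{I}}$ in the nonlocal Green's identity producing the extra right-hand term via $\partial_t v=e^{s\varphi}(\partial_t u-s\lambda\varphi u)$ and $\mathcal{N}(a\cdot\mathcal{D}^{*}v)=e^{s\varphi}\mathcal{N}(a\cdot\mathcal{D}^{*}u)$ --- are exactly the required ones. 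The only step worth spelling out is the absorption: since now $\varphi=e^{-\lambda t}\geq e^{-\lambda T}$ rather than $\varphi\geq 1$, absorbing $C\lambda^{2}\|v\|_{L^{2}(Q)}^{2}$ into $s\lambda^{2}\int_{Q}\varphi v^{2}\,dxdt$ forces $s\geq Ce^{\lambda T}$, which is precisely why the statement lets $s_{0}$ depend on $\lambda$.
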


\section{Applications to two inverse problems}\label{BackwardSection}

In the first part this section, we apply Theorem \ref{carlemanEstimateThe} to a backward diffusion problem and establish a
conditional stability estimate.
Let us now specify the problem as follows.

\emph{Backward in time problem}: Let $0 \leq t_{0} < T$. For system (\ref{Equation1}) or (\ref{Equation2}), determine $u(x,t_{0})$, $x\in \Omega$ from $u(x,T)$, $x\in \Omega\cup\Omega_{\mathcal{I}}$.

For this backward in time problem, we have the following theorem.
\begin{theorem}\label{backwardTheorem}
Let $u$ to be a solution of system (\ref{Equation1}) or (\ref{Equation2}) satisfying
\begin{align*}
u\in C(0,T;H_{c}^{\beta(\cdot)}(\tilde{\Omega})) \cap L^{2}(0,T; \tilde{H}^{2\beta(\cdot)}(\tilde{\Omega})),
\quad
\partial_{t}u \in L^{2}(0,T; L^{2}(\Omega)).
\end{align*}
For any $t_{0}\in (0,T)$, there exist constants $\theta\in(0,1)$ and $C > 0$ depending on $t_{0}$,
$a_{*}$, $a^{*}$, $T$, $\Omega$ and $\Omega_{\mathcal{I}}$ such that
\begin{align}\label{backTheorem1}
\|u(\cdot,t_{0})\|_{L^{2}(\Omega)} \leq C
\|u\|_{L^{2}(\tilde{Q})}^{1-\theta}\|u(\cdot,T)\|_{H^{\beta(\cdot)}(\Omega\cup\Omega_{\mathcal{I}})}^{\theta}.
\end{align}
If in addition, $\|u(\cdot,0)\|_{L^{2}_{c}(\Omega\cup\Omega_{\mathcal{I}})} < \infty$, then we have
\begin{align}\label{backTheorem2}
\|u\|_{L^{2}(Q)} = O\left( \log \frac{1}{\|u(\cdot,T)\|_{H^{\beta(\cdot)}(\Omega\cup\Omega_{\mathcal{I}})}} \right)^{-1/2}
\end{align}
as $\|u(\cdot,T)\|_{H^{\beta(\cdot)}(\Omega\cup\Omega_{\mathcal{I}})} \rightarrow 0$.
\end{theorem}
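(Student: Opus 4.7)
The plan is to apply Theorem \ref{carlemanEstimateThe} to a suitable cut-off of $u$, exploiting the monotonicity $\frac{d}{dt}\|u(\cdot,t)\|^2_{L^2(\tilde{\Omega})} = -2|||u(\cdot,t)|||^2 \leq 0$, which follows from the non-negativity of $\mathcal{D}(a\cdot \mathcal{D}^*\cdot)$ noted in Section \ref{nonlocalEquation}. Since no source term appears in \eqref{backTheorem1}, I restrict to the homogeneous case $f \equiv 0$. Fix $0 < \delta < t_0$ and a cut-off $\chi \in C^\infty([0,T])$ with $\chi \equiv 0$ on $[0,\delta/2]$ and $\chi \equiv 1$ on $[\delta,T]$, and set $w := \chi u$. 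Then $w(\cdot,0) = 0$; since $\chi$ is $x$-independent, $w$ inherits the Dirichlet or Neumann volume-constrained condition from $u$, so Theorem \ref{carlemanEstimateThe} applies to $w$. Moreover $L(w) = \chi'(t)\,u$ is temporally supported in $[\delta/2, \delta]$.

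Applying the Carleman estimate with $\varphi(t) = e^{\lambda t}$ to $w$ eliminates the $\|w(\cdot, 0)\|_{H^{\beta(\cdot)}}$ term on the right. Retaining only the $s\lambda^2 \varphi w^2 e^{2s\varphi}$ contribution on the left, restricting the $t$-integration to $[t_0 - \varepsilon, t_0]$ (with $\varepsilon$ chosen so that $\delta < t_0 - \varepsilon$), on which $\chi \equiv 1$, and using the $L^2$-monotonicity $\|u(\cdot,t)\|^2_{L^2(\Omega)} \geq \|u(\cdot, t_0)\|^2_{L^2(\Omega)}$ for $t \leq t_0$, produces a lower bound proportional to $s\lambda^2 \varepsilon\, e^{2s e^{\lambda(t_0-\varepsilon)}}\,\|u(\cdot, t_0)\|^2_{L^2(\Omega)}$. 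The right-hand side source contribution is controlled by $C e^{2s e^{\lambda\delta}}\|u\|^2_{L^2(\tilde{Q})}$ since $\varphi \leq e^{\lambda\delta}$ on $\supp \chi'$. Dividing through by $e^{2s e^{\lambda(t_0-\varepsilon)}}$ yields
\begin{align*}
\|u(\cdot, t_0)\|^2_{L^2(\Omega)} \leq C e^{-\mu_1 s}\|u\|^2_{L^2(\tilde{Q})} + C e^{\mu_2 s}\|u(\cdot, T)\|^2_{H^{\beta(\cdot)}(\tilde{\Omega})},
\end{align*}
with $\mu_1 := 2(e^{\lambda(t_0-\varepsilon)} - e^{\lambda\delta}) > 0$ and $\mu_2 := C(\lambda) - 2e^{\lambda(t_0-\varepsilon)} > 0$ for $\lambda$ sufficiently large. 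Balancing via $s^* = (\mu_1+\mu_2)^{-1}\log(\|u\|^2_{L^2(\tilde{Q})}/\|u(\cdot,T)\|^2_{H^{\beta(\cdot)}})$ then gives \eqref{backTheorem1} with $\theta = \mu_1/(\mu_1 + \mu_2) \in (0,1)$.

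For \eqref{backTheorem2} I would couple the Hölder estimate just derived with the a priori bound $M := \|u(\cdot, 0)\|_{L^2_c(\tilde{\Omega})}$, noting $\|u\|_{L^2(\tilde{Q})} \leq \sqrt{T}\, M$ by $L^2$-monotonicity. The key point is that $\theta = \theta(t_0)$ vanishes linearly as $t_0 \downarrow \varepsilon + \delta$: writing $\sigma := \|u(\cdot,T)\|^2_{H^{\beta(\cdot)}}/(TM^2)$ and decomposing $\|u\|^2_{L^2(Q)} = \int_0^\tau \|u(\cdot,t_0)\|^2 dt_0 + \int_\tau^T \|u(\cdot,t_0)\|^2 dt_0$ for an auxiliary threshold $\tau$, the first integral is crudely bounded by $\tau M^2$ and the second by $CTM^2 \int_\tau^T \sigma^{\theta(t_0)}dt_0$; the elementary computation $\int_\tau^T \sigma^{c(t_0-\tau)}dt_0 = (\sigma^{c(T-\tau)}-1)/(c\log\sigma) = O(|\log\sigma|^{-1})$ as $\sigma \to 0$, combined with the choice $\tau \sim |\log\sigma|^{-1}$, then gives $\|u\|^2_{L^2(Q)} = O(|\log\sigma|^{-1})$, which is exactly \eqref{backTheorem2}.

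The main obstacle will be precise book-keeping of constants: verifying $\mu_2 > 0$ requires inspecting the derivation of Theorem \ref{carlemanEstimateThe} to confirm that $C(\lambda)$ scales like $e^{\lambda T}$ (it stems from boundary terms of the form $e^{\lambda T}\|u(\cdot,T)\|^2_{L^2}$ in that proof), while the logarithmic rate requires letting the cut-off parameters $\delta, \varepsilon, \tau$ shrink together with $\|u(\cdot,T)\|_{H^{\beta(\cdot)}}$ and tracking how the Hölder constant deteriorates under this joint limit.
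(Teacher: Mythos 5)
Your derivation of the H\"older estimate \eqref{backTheorem1} is sound and follows essentially the same route as the paper, which simply invokes the cut-off argument of Theorem 9.2 in \cite{yamamoto2009carleman}: reduce to $f\equiv 0$, multiply by a time cut-off vanishing near $t=0$ so that the $\|w(\cdot,0)\|_{H^{\beta(\cdot)}(\tilde{\Omega})}$ term disappears and $L(w)=\chi'u$ is supported where $\varphi\leq e^{\lambda\delta}$, bound the left-hand side from below on $[t_0-\varepsilon,t_0]$ via the $L^2$-monotonicity, and optimize in $s$. Two small points: the monotonicity should be stated for $\|u(\cdot,t)\|_{L^{2}(\Omega)}$ rather than $L^{2}(\tilde{\Omega})$, since in the Neumann case the weak formulation only tests $\partial_t u$ against functions over $\Omega$; and the optimization needs the usual fallback when $s^{*}<s_{0}$, handled by the trivial bound $\|u(\cdot,t_0)\|_{L^2(\Omega)}^2\le \varepsilon^{-1}\|u\|_{L^2(\tilde{Q})}^2$. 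Neither affects the argument for \eqref{backTheorem1}.

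The gap is in the passage to \eqref{backTheorem2}. You bound $\int_\tau^T\|u(\cdot,t_0)\|_{L^2(\Omega)}^2\,dt_0\le CTM^2\int_\tau^T\sigma^{\theta(t_0)}\,dt_0$ with a constant $C$ implicitly uniform in $\tau$, and then choose $\tau\sim|\log\sigma|^{-1}$. But the H\"older constant you produced for $t_0\in[\tau,T]$ contains the factors $\varepsilon^{-1}$ (from averaging over $[t_0-\varepsilon,t_0]$) and $\|\chi'\|_\infty^2\sim\delta^{-2}$, and since the cut-off parameters must satisfy $\delta<t_0-\varepsilon$ uniformly for $t_0\ge\tau$, they shrink proportionally to $\tau$; hence $C=C(\tau)\gtrsim \tau^{-k}$ for some $k\ge 1$. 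With $\tau\sim|\log\sigma|^{-1}$ the gain $\sigma^{c\tau}=e^{-c\tau|\log\sigma|}$ is only $O(1)$, so the second integral is of order $\tau^{-k}|\log\sigma|^{-1}$, which does not decay; balancing $\tau M^2$ against $\tau^{-k}e^{-c\tau|\log\sigma|}|\log\sigma|^{-1}$ forces $\tau|\log\sigma|\to\infty$ and yields a rate strictly weaker than $\big(\log(1/\|u(\cdot,T)\|_{H^{\beta(\cdot)}(\tilde{\Omega})})\big)^{-1/2}$. You flag this bookkeeping as the main obstacle, but as written the proposed choice of $\tau$ does not close it. To be fair, this is precisely the step the paper compresses into ``integrating the above inequality with respect to $t_0$''; making it rigorous requires either a lower bound for the left-hand side of the Carleman inequality that avoids the $\varepsilon^{-1}$ and $\delta^{-2}$ losses, or accepting the weaker rate that the balancing actually produces.
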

\begin{proof}
As in the proof of Theorem 9.2 in \cite{yamamoto2009carleman},
we could choose same cut-off function and by using Theorem \ref{carlemanEstimateThe} to conclude that
\begin{align}\label{yingyong1}
\|u(\cdot,t_{0})\|_{L^{2}(\Omega)} \leq C \|u\|_{L^{2}(\tilde{Q})}^{\frac{C}{C+2(\delta_{0}-\delta_{1})}}
\|u(\cdot,T)\|_{H^{\beta(\cdot)}(\tilde{\Omega})}^{\frac{2(\delta_{0}-\delta_{1})}{C+2(\delta_{0}-\delta_{1})}},
\end{align}
where $\delta_{k}=e^{\lambda t_{k}}$, $k=0,1,2$ and $0<t_{2}<t_{1}<t_{0}$.
Now, integrating the above inequality with respect to $t_{0}$ from $0$ to $T$, we find that
\begin{align*}
\|u\|_{L^{2}(Q)} \leq C(1+\|u(\cdot,0)\|_{L_{c}^{2}(\tilde{\Omega})})
\left( \log \frac{1}{\|u(\cdot,T)\|_{H^{\beta(\cdot)}(\Omega\cup\Omega_{\mathcal{I}})}} \right)^{-1/2}.
\end{align*}
Thus the proof is completed.
\end{proof}

In the second part of this section, let us focus on a special inverse source problem.
Let $x = (x_{1},x')\in\mathbb{R}^{n}$ and $x'=(x_{2},\cdots,x_{n})\in\mathbb{R}^{n-1}$,
$\Omega = (0,\ell)\times D'$, $D'\subset\mathbb{R}^{n-1}$ be a bounded domain with smooth boundary $\partial D'$.
We consider
\begin{align}\label{yingyong21}
\begin{split}
\begin{cases}
\partial_{t}u(x,t) + \mathcal{D}(a\cdot\mathcal{D}^{*}u)(x,t) = f(x',t), \quad x\in \Omega,\,t>0 \\
u(x,0) = 0, \quad x\in\Omega\cup\Omega_{\mathcal{I}}  \\
\mathcal{N}(a\cdot\mathcal{D}^{*}u)(x,t) = 0, \quad x\in\Omega_{\mathcal{I}}, \, t>0.
\end{cases}
\end{split}
\end{align}
Here, we only consider a simple case that is $\beta$ is a constant between $0$ and $1$,
$\epsilon$ appeared in the definition of $\mathcal{D}(a\cdot\mathcal{D}^{*}\cdot)$ is equal to $\infty$
and function $a(t,x,y) \equiv 1$.

\emph{Inverse heat source problem}: Let $t_{0} > 0$. Determine $f(x',t)$, $x'\in D'$, $0<t<t_{0}$ by $u|_{\Omega_{\mathcal{I}}\times(0,t_{0})}$.

For this problem, we have the following result.
\begin{theorem}\label{uniqueness}
We assume that $u, \partial_{x_{1}}u \in C(0,T;H_{c}^{\beta(\cdot)}(\tilde{\Omega})) \cap H^{1}(0,T; L^{2}(\Omega))$,
For $t_{0} > 0$, if $u|_{(\mathbb{R}^{n}\backslash\Omega)\times(0,t_{0})} = 0$, then
$f(x',t) = 0$, $x'\in D'$, $0\leq t\leq t_{0}$.
\end{theorem}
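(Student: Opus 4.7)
The plan is to eliminate the unknown source $f$ by differentiating (\ref{yingyong21}) in $x_1$, the variable on which $f$ does not depend, and then use the Dirichlet-type overdetermined data to conclude that $v := \partial_{x_1} u \equiv 0$. Because in this special setting $a \equiv 1$, $\beta$ is constant and $\epsilon = \infty$, the operator $\mathcal{D}(a\cdot\mathcal{D}^{*}\cdot)$ is, up to a multiplicative constant, the translation-invariant Fourier multiplier $(-\Delta)^{\beta}$ (see the remark after (\ref{defineGamma})), and therefore commutes with $\partial_{x_1}$. Applying $\partial_{x_1}$ to the equation yields $L(v) = \partial_{x_1}f = 0$ on $\Omega\times(0,t_{0})$; differentiating the initial datum gives $v(\cdot,0) = 0$ on $\tilde{\Omega}$; and since $u$ vanishes on the open set $\Omega_{\mathcal{I}}\times(0,t_{0})$, its distributional $x_{1}$-derivative vanishes there as well, so $v = 0$ on $\Omega_{\mathcal{I}}\times(0,t_{0})$. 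The hypothesis $\partial_{x_{1}}u \in C(0,T;H_{c}^{\beta(\cdot)}(\tilde{\Omega}))\cap H^{1}(0,T;L^{2}(\Omega))$ is exactly what is needed to make each of these manipulations meaningful.

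To force $v\equiv 0$ I would test $L(v)=0$ against $v$ on $\Omega$ and invoke the nonlocal Green's first identity (\ref{nonlocalGreenFirst}) with the homogeneous Dirichlet datum $v|_{\Omega_{\mathcal{I}}} = 0$, obtaining the energy identity
\begin{equation*}
\tfrac{1}{2}\tfrac{d}{dt}\|v(\cdot,t)\|_{L^{2}(\Omega)}^{2} + |||v(\cdot,t)|||^{2} = 0.
\end{equation*}
Since $|||\cdot|||^{2}\ge 0$ and $v(\cdot,0) = 0$, this immediately forces $v\equiv 0$ on $\tilde{\Omega}\times(0,t_{0})$. Alternatively, the same conclusion can be extracted from the Carleman estimate of Theorem \ref{carlemanEstimateThe} applied on $(0,t_{0})$ with the Dirichlet hypothesis (\ref{carelThe1}): because $L(v)=0$ and $v(\cdot,0)=0$, its right-hand side collapses to $Ce^{C(\lambda)s}\|v(\cdot,t_{0})\|_{H^{\beta(\cdot)}(\tilde{\Omega})}^{2}$, which the energy identity controls by zero.

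Once $v = \partial_{x_{1}}u \equiv 0$ on $\mathbb{R}^{n}\times(0,t_{0})$, the function $u(x_{1},x',t)$ is constant in $x_{1}$, so we may write $u(x,t) = U(x',t)$; evaluating at any $x_{1}\notin(0,\ell)$, which lies in $\Omega_{\mathcal{I}}$ where $u = 0$ by the overdetermined condition, gives $U\equiv 0$ and hence $u\equiv 0$ on $\mathbb{R}^{n}\times(0,t_{0})$. Substituting back into (\ref{yingyong21}) yields $f(x',t) = 0$ for $x'\in D'$ and $0\le t<t_{0}$, and the range is closed up to $t=t_{0}$ by the $L^{2}$-regularity of $f$. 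The main obstacle is not any hard estimate but the two justifications in the first step: the commutation of $\partial_{x_{1}}$ with $\mathcal{D}(a\cdot\mathcal{D}^{*}\cdot)$, which rests on the translation invariance of the kernel in the present simplified setting, and the inheritance by $v$ of the volume-constraint $v|_{\Omega_{\mathcal{I}}} = 0$ in $H_{c}^{\beta(\cdot)}(\tilde{\Omega})$; both rely on the stated regularity of $\partial_{x_{1}}u$, which explains why that regularity is imposed in the theorem's hypothesis.
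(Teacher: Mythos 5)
Your proof is correct in its main line, but it takes a genuinely different route from the paper for the key step. Both proofs begin identically: differentiate in $x_{1}$ (using translation invariance of the kernel in this simplified setting) so that $v=\partial_{x_{1}}u$ solves the homogeneous equation with $v(\cdot,0)=0$ and $v=0$ on $(\mathbb{R}^{n}\setminus\Omega)\times(0,t_{0})$, and both finish identically ($v\equiv 0$ plus vanishing of $u$ outside $(0,\ell)\times D'$ gives $u\equiv 0$, hence $f=0$). Where you diverge is in showing $v\equiv 0$: you observe that $v$ satisfies a homogeneous forward problem with homogeneous Dirichlet-type volume constraint and zero initial datum, so the elementary energy identity $\tfrac{1}{2}\tfrac{d}{dt}\|v\|_{L^{2}(\Omega)}^{2}+2|||v|||^{2}=0$ (the exterior term in the Green's identity (\ref{nonlocalGreenFirst}) vanishing because $v|_{\Omega_{\mathcal{I}}}=0$) forces $v\equiv 0$. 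The paper instead introduces a temporal cut-off $w=\chi v$ with $w(\cdot,0)=w(\cdot,t_{0})=0$ and applies the backward-weighted Carleman estimate of Theorem \ref{carlemanEstimateThe11}, exploiting that $\chi'$ is supported in $[t_{1},t_{2}]$ where the weight $e^{2s\varphi}$ is smaller than on $(0,t_{0}-\epsilon)$, and lets $s\to\infty$. Your argument is more elementary and avoids the Carleman machinery entirely; the paper's choice is presumably motivated by wanting to exhibit an application of Theorem \ref{carlemanEstimateThe11} (and that technique would survive perturbations, e.g.\ lower-order terms of indefinite sign, where a naive energy identity loses monotonicity). One caveat: your parenthetical ``alternative'' via Theorem \ref{carlemanEstimateThe} does not work as stated, since its right-hand side involves $\|v(\cdot,t_{0})\|_{H^{\beta(\cdot)}(\tilde{\Omega})}^{2}$, which is not known to vanish a priori — invoking the energy identity to control it makes that route circular, and this is precisely why the paper needs the cut-off and the variant Carleman estimate requiring $u(\cdot,T)=0$. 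Since you offer that only as an aside and your primary energy argument is self-contained, the proof stands.
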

\begin{proof}
For arbitrary small $\epsilon > 0$, we choose $t_{1}$, $t_{2}$ such that $0 < t_{0}-\epsilon <t_{1} < t_{2} < t_{0}$.
We set $\delta_{k} = e^{-\lambda t_{k}}$, $k=0,1,2$. Let $\chi\in C^{\infty}(\mathbb{R})$ be a cut-off function such
that $0\leq\chi\leq 1$ and
\begin{align*}
\chi(t) = \left\{\begin{aligned}
& 1, \quad t < t_{1}, \\
& 0, \quad t \geq t_{2}.
\end{aligned}\right.
\end{align*}
For $u$, we have
\begin{align}\label{inverseSource1}
\partial_{t}u + \mathcal{D}\mathcal{D}^{*}u = f(x',t), \quad x\in\Omega,\, 0<t<t_{2}
\end{align}
and
\begin{align*}
u|_{(\mathbb{R}^{n}\backslash\Omega)\times(0,t_{0})} = \mathcal{N}(\mathcal{D}^{*}u)|_{(\mathbb{R}^{n}\backslash\Omega)\times(0,t_{0})} = 0.
\end{align*}
Differentiating both sides of (\ref{inverseSource1}) with respect to $x_{1}$ and setting $v = \partial_{x_{1}}u$, we obtain
\begin{align}\label{inverseSource2}
\partial_{t}v + \mathcal{D}\mathcal{D}^{*}v = 0, \quad x\in\Omega,\, 0<t<t_{2}.
\end{align}
In the above calculation, we used the property $\partial_{x_{1}}\mathcal{D}\mathcal{D}^{*} = \mathcal{D}\mathcal{D}^{*}\partial_{x_{1}}$,
which relies on our assumptions stated below (\ref{yingyong21}).
Setting $w = \chi v$, we have
\begin{align}\label{inverseSource3}
\partial_{t}w + \mathcal{D}\mathcal{D}^{*}w = - \chi'(t)v,  \quad x\in\Omega,\, 0<t<t_{2}
\end{align}
and
\begin{align*}
w(x,t_{0}) = w(x,0) = 0, \quad w|_{(\mathbb{R}^{n}\backslash\Omega)\times(0,t_{0})} = 0.
\end{align*}
Using Theorem \ref{carlemanEstimateThe11}, we find that
\begin{align*}
\int_{0}^{t_{0}}\int_{\Omega}s\lambda^{2}\varphi w^{2}e^{2s\varphi} dxdt & +
\lambda\int_{0}^{t_{0}}\int_{\tilde{\Omega}}\int_{\tilde{\Omega}}\frac{|w(y,t)-w(x,t)|^{2}}{|y-x|^{n+2\beta}}e^{2s\varphi}dydxdt \\
& \leq C \int_{0}^{t_{0}}\int_{\Omega}|\chi' v|^{2}e^{2s\varphi}dxdt.
\end{align*}
Denote $M = \|\partial_{x_{1}}u\|_{L^{2}(Q)}$. Using the properties of $\chi$, we obtain
\begin{align}\label{inverseSource4}
\int_{0}^{t_{0}}\int_{\Omega}|\chi' v|^{2}e^{2s\varphi}dxdt \leq C M^{2}e^{2s\delta_{1}}.
\end{align}
Setting $\delta_{\epsilon} = e^{-\lambda(t_{0}-\epsilon)}$, using the properties of $\chi$ and (\ref{inverseSource4}), there holds
\begin{align*}
& e^{2s\delta_{\epsilon}}\left\{ \int_{0}^{t_{0}-\epsilon}\int_{\Omega}sw^{2}dxdt
+ \int_{0}^{t_{0}-\epsilon}\int_{\tilde{\Omega}}\int_{\tilde{\Omega}}\frac{|w(y,t)-w(x,t)|^{2}}{|y-x|^{n+2\beta}}dydxdt \right\} \\
\leq & \int_{0}^{t_{0}}\int_{\Omega}sw^{2}e^{2s\varphi}dxdt +
\int_{0}^{t_{0}}\int_{\tilde{\Omega}}\int_{\tilde{\Omega}}\frac{|w(y,t)-w(x,t)|^{2}}{|y-x|^{n+2\beta}}e^{2s\varphi}dydxdt   \\
\leq & C M^{2} e^{2s\delta_{1}}.
\end{align*}
Hence, we easily obtain
\begin{align}\label{inverseSource5}
\|w\|_{L^{2}(0,t_{0}-\epsilon; H^{\beta}(\Omega))} \leq C M^{2} e^{-2s(\delta_{\epsilon}-\delta_{1})}
\end{align}
for all large $s > 0$. Since $\delta_{\epsilon} - \delta_{1} > 0$, letting $s \rightarrow \infty$, we
obtain $w = 0$ in $\Omega\times(0,t_{0}-\epsilon)$. Obviously, we have $v = 0$ in $\Omega\times(0,t_{0}-\epsilon)$.
Considering the assumptions, we have $u = 0$ in $\Omega\times(0,t_{0}-\epsilon)$, which implies $f = 0$ in $D'\times(0,t_{0}-\epsilon)$
by noticing (\ref{inverseSource1}). Because $\epsilon > 0$ is arbitrary, the proof is completed.
\end{proof}


\section*{Acknowledgments}
This work was partially supported by NSFC under Contact 11501439, 11131006 and 41390454,
the Postdoctoral Science Foundation Project of China under grant no. 2015M580826 and
the Natural Science Foundation Project of Shannxi under grant no. 2016JQ1020.


\bibliographystyle{plain}
\bibliography{references}

\end{document}